\newcommand {\PP}{{I\kern-.3em P}}
\newcommand {\ZZ}{{\mathbb{Z}}}
\newcommand {\HH}{{\mathbb{H}}}
\newcommand {\RR}{{\mathbb{R}}}
\newcommand {\NN}{{\mathbb {N}}}
\newcommand {\QQ}{{\mathbb{Q}}}
\newcommand{\beq}{\begin{equation}}
\newcommand{\eeq}{\end{equation}}
\newcommand{\beqq}{\begin{equation*}}
\newcommand{\eeqq}{\end{equation*}}
\newcommand{\pmtx}[1]{\begin{pmatrix}#1\end{pmatrix}}
 \numberwithin{equation}{section}
\newenvironment{prolist}%
{\begin{enumerate}[\bfseries 1.]}{\end{enumerate}}
{\begin{enumerate}[\bfseries a.]}{\end{enumerate}}
\def\d{\displaystyle}
\newtheorem{thm}{Theorem}
\newtheorem{lemma}{Lemma}
\newtheorem{prop}{Proposition}
\newtheorem{cor}{Corollary}
\begin{document}
\title{Geodesic cusp excursions and metric diophantine approximation   }
\author{Andrew Haas }
 \address{University of Connecticut\\ Department of Mathematics\\
 Storrs, CT 06269}
 \email{haas@math.uconn.edu}

\begin{abstract}

We derive several results   that describe the rate at which a generic geodesic makes excursions into and out of a cusp on a finite area hyperbolic surface and relate them to approximation with respect to the orbit of infinity for an associated Fuchsian group.
This provides proofs of some well known theorems from metric diophantine approximation in the context of Fuchsian groups. It also gives new results in the classical setting.  
 \end{abstract}

 \subjclass{ 30F35, 32Q45, 37E35,  53D25, 11K60}
 
 \keywords{Hyperbolic surfaces, Fuchsian groups, geodesic flow,  cusp, metric diophantine
 approximation }
\maketitle

\section{Introduction}\label{intro}
There is a simple, direct relationship between  classical diophantine approximation and the behavior of geodesics on the Modular surface.  In a number of instances mathematicians have made good use of this sort of  connection in a more general setting: in order to better understanding the geometry of geodesics and other natural geometric objects in certain Riemannian manifolds, to study approximation with respect to  the discrete groups associated with such manifolds as well as to shed light on the classical theory of rational approximation and its generalizations.  For some   examples and further references see \cite{bks, harvey, haas0, her, nakada, series, sullivan}.

 In this paper we continue the investigations begun in \cite{haas} into the particular connection between metric diophantine approximation and the generic behavior of geodesic excursions into a cusp neighborhood on a finite area hyperbolic 2-orbifold. The geometric piece of the earlier paper was concerned with the distribution of the sequence of depths of maximal incursions into the cusp. The distribution exists for a generic geodesic and is independent of the geometry and topology of the surface.  
Here we begin by calculating the asymptotic rate at which a geodesic makes excursions into a cusp neighborhood. This quantity exists for almost all geodesics and, in contrast to the distribution of depths, does depend on the area  of the orbifold. The geometric results are  then applied to derive versions of several well known theorems in classical diophantine approximation in the context of Fuchsian groups, as well as some new theorems in the classical setting. 
It is surprising how many results from classical  diophantine approximation are tied together by the underlying geometry.


In the remainder of this section we  establish some notation and state the main results of the paper.
 \subsection{Geodesic cusp excursions}
 Let $S$ be a finite area, non-compact hyperbolic 2-orbifold and let $P$ denote a
non-compact end, often called a puncture, of $S$. 
There is a canonically defined
horocyclic, cusp neighborhood  of $P$ which we
shall call $C_{k}.$ If  $S$ is represented by a  Fuchsian group $\Gamma$ so that the stabilizer of infinity is generated by the transformation $T(z)=z+1$ and $P$ corresponds to the point at infinity, then $C_k$ is the projection to $S$ of the half-plane $H_{\frac{1}{k}} =\{z\,|\, \text{Im}(z)>\frac{1}{k}\}.$
If $k<1$ then  the area of $C_k$ is  always $k$. Such a normalized group $\Gamma$ is called a {\em zonal} Fuchsian group

Given a unit tangent vector $ {\mathbf v}\in
T_1S$, let $\gamma({\mathbf v})=\gamma$ denote the unique geodesic on $S$ in the direction of ${\mathbf v}.$   An   excursion of $\gamma$ into $C_k$,  called a $k$-{\it excursion}, is a point $\gamma(\tau)$, on the geodesic $\gamma$, at which $\gamma$ is tangent to a cusp neighborhood $C_d$ for $d< k.$  The value $\tau\in\RR$ is called the {\em depth parameter} for the $k$-excursion and $d$ is the {\em depth of the excursion}. For $k\leq 2$, a $k$-excursion is an {\em approximating} $k$-excursion if there is a \lq\lq loop" of $\gamma$ about $P$, corresponding to the excursion. The precise definition is given in Section \ref{normalize}. The motivation for distinguishing this particular type of excursion will become clear  in Section \ref{approximation}, where we see how approximating excursions correspond to the continued fraction  convergents in classical diophantine approximation. It is worth noting that when $k\leq1$ every  $k$-excursion is an approximating $k$-excursion. 

In dealing with  $k$-excursions it is necessary for us to place some restrictions on the orbifolds being considered.   Our assumptions will be spelled out in Section \ref{normalize}. No such restrictions are necessary when dealing with approximating excursions or with excursion on surfaces without order-2 cone singularities.
 
The depth parameters for the $k$-excursions along a geodesic are called the {\em $k$-depth parameters}. They can be indexed by $\NN$ in such a way that they respect the ordering as real numbers.  Let $\{t_i\}$ denote the ordered sequence of $k$-depth parameters along a geodesic $\gamma.$ It is easy to see that $t_i\rightarrow\infty$ as $i\rightarrow\infty$.
For all ${\mathbf v}$ in a full measure subset $\mathcal{E} $ of $T_1S$ and for every $0< k\leq 2$, there are infinitely many   $k$-excursions $\gamma(t_i)$ as well as infinitely many approximating $k$-excursions $\gamma(t_{i_j})$ in the forward direction along $\gamma$, \cite{haas}.

The set of $k$-depth parameters is written $\Pi_{{\mathbf v}}(k)=\{ t_i\geq 0\,|\,   \gamma({\mathbf v})(t_i)\, \text{is a } \, k-\text{excursion}\}.   $ Let $\Pi^a_{{\mathbf v}}(k)$ be the subset  corresponding to approximating excursions. The notation $\#X$ shall denote the cardinality of the set $X$. 
For  $0<k\leq 2$ the value  $N_{{\mathbf v}}(k)(t)=\#\{0\leq t_e\leq
t|\, t_e\in\Pi_{{\mathbf v}}(k)\}$  is defined for all $t\geq 0$. Similarly, define $N_{\mathbf v}^a(k)(t)=\#\{0\leq t_e\leq
t|\, t_e\in\Pi_{\mathbf v}^a(k)\}.$ $N_{\mathbf v}$ and $N_{\mathbf v}^a$, respectively, count the number of $k$-excursions and approximating $k$-excursions as  functions of arc length.  We shall let the symbol $*$ denote either the letter $a$ or nothing. For example,  $N^*_{\mathbf v}$  is either $N_{\mathbf v}$ or $N_{\mathbf v}^a$.  Define the function

 \beqq
 \d{{\mathcal A}}(z)=
\left\{ \begin{array}{ll}
  z & \mbox{if $0<z\leq 1$}\\
 (2-z+2\log z)  & \mbox{if $ 1\leq z\leq2.$ }
\end{array}
\right.
\eeqq

Then we have

\begin{thm}\label{counting}
For almost all ${\mathbf v}\in T_1S$ and $0<k\leq 2$
\beq\label{overlap}
\lim_{t\rightarrow\infty}\frac{N_{\mathbf v}(k)(t)}{t}=\frac{k}{\pi\, \text{area}(S)}
 \eeq
and
\beq\label{app}
\lim_{t\rightarrow\infty}\frac{N_{\mathbf v}^a(k)(t)}{t}=\frac{  \mathcal{A}(k) }{\pi\, \text{area}(S)}.
  \eeq
  \end{thm}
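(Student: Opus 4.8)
The plan is to express $N^*_{\mathbf v}(k)(t)$, up to a negligible error, as a Birkhoff sum for the geodesic flow, evaluate the limit by the ergodic theorem, and then compute the resulting integral over $T_1S$ by unfolding to $\HH$ and a flow-box (Kac-type) calculation.

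\emph{Reduction to an ergodic average.} Let $\{\phi_t\}_{t\in\RR}$ be the geodesic flow on $T_1S$, and for $\mathbf u\in T_1S$ put
\[
f^*_k(\mathbf u)=\#\bigl\{\tau\in[0,1)\ :\ \gamma(\mathbf u)(\tau)\ \text{is a}\ *\text{-}k\text{-excursion}\bigr\},
\]
so that $f^*_k(\phi_j\mathbf v)$ counts the $*$-$k$-excursions of $\gamma(\mathbf v)$ with depth parameter in $[j,j+1)$, and
\[
N^*_{\mathbf v}(k)(t)=\sum_{j=0}^{\lfloor t\rfloor-1}f^*_k(\phi_j\mathbf v)+r(t),\qquad 0\le r(t)\le f^*_k(\phi_{\lfloor t\rfloor}\mathbf v).
\]
Let $\mu$ be the normalized Liouville measure on $T_1S$. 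The geodesic flow is mixing, hence its time-one map is ergodic; granting $f^*_k\in L^1(\mu)$ — which falls out of the next step — Birkhoff's theorem gives $\tfrac1n\sum_{j<n}f^*_k(\phi_j\mathbf v)\to\int_{T_1S}f^*_k\,d\mu$ a.e., while $\tfrac1t r(t)\le\tfrac1t f^*_k(\phi_{\lfloor t\rfloor}\mathbf v)\to0$ a.e.\ since $f^*_k\in L^1$. Thus $\lim_{t\to\infty}N^*_{\mathbf v}(k)(t)/t=\int_{T_1S}f^*_k\,d\mu$ for a.e.\ $\mathbf v$, and everything reduces to evaluating these two integrals.

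\emph{The integral \eqref{overlap}.} Take $\Gamma$ zonal, so $P$ lifts to $H_{1/k}=\{\operatorname{Im}z>1/k\}$, stabilized by $\Gamma_\infty=\langle z\mapsto z+1\rangle$. A $k$-excursion of $\gamma(\mathbf v)$ — a tangency of $\gamma$ to some $C_d$, $d<k$ — lifts to the unique highest point of a lift $\tilde\gamma$ of $\gamma(\mathbf v)$ inside some horoball $\sigma H_{1/k}$, $\sigma\in\Gamma/\Gamma_\infty$; that point has height $>1/k$, and its depth parameter is its arclength along $\tilde\gamma$ from the basepoint of $\tilde{\mathbf v}$. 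Hence on $T_1\HH$ one has $f_k\circ\pi=\sum_{\sigma\in\Gamma/\Gamma_\infty}\psi\circ\sigma^{-1}$, where $\psi(\mathbf w)$ is the indicator of the event that the geodesic through $\mathbf w$ attains its highest point with respect to the horoball at $\infty$ at an arclength in $[0,1)$ ahead of the basepoint of $\mathbf w$, at height $>1/k$; note $\psi$ is $\Gamma_\infty$-invariant, so $\psi\circ\sigma^{-1}$ depends only on the coset. Unfolding the sum over $\Gamma/\Gamma_\infty$ gives
\[
\int_{T_1S}f_k\,d\mu=\frac{1}{\operatorname{vol}(F)}\int_{F'}\psi\,d\operatorname{vol},\qquad \operatorname{vol}(F)=2\pi\,\text{area}(S),
\]
with $F$ a fundamental domain for $\Gamma$ in $T_1\HH$ and $F'$ one for $\Gamma_\infty$, which we take to be the unit tangent vectors over $\{0\le\operatorname{Re}z<1\}$. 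In coordinates $(x,y,\theta)$ one has $d\operatorname{vol}=y^{-2}\,dx\,dy\,d\theta$ and the geodesic vector field $X$ satisfies $\dot\theta=-\cos\theta$, so $X$ is transverse to the hypersurface $\Sigma$ of horizontal vectors ($\theta\in\{0,\pi\}$); the flow-box change of variables $\mathbf w\mapsto(\phi_{\tau_0}\mathbf w,\tau_0)$, with $\tau_0$ the arclength from $\mathbf w$ to the highest point, carries $d\operatorname{vol}$ to (the flux measure $\iota_X d\operatorname{vol}$ on $\Sigma$) times $d\tau_0$. Therefore $\int_{F'}\psi\,d\operatorname{vol}$ equals the flux of $\Sigma\cap F'$ over $\{\operatorname{Im}z>1/k\}$; this flux measure is $\pm y^{-2}\,dx\,dy$ on each of the two sheets $\theta\equiv0$, $\theta\equiv\pi$, so
\[
\int_{F'}\psi\,d\operatorname{vol}=2\int_0^1\!\!\int_{1/k}^\infty\frac{dy\,dx}{y^2}=2k .
\]
In particular $f_k\in L^1$, and $\int_{T_1S}f_k\,d\mu=2k/(2\pi\,\text{area}(S))=k/(\pi\,\text{area}(S))$, which is \eqref{overlap}.

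\emph{The integral \eqref{app}, and uniformity in $k$.} For $N^a_{\mathbf v}$ the argument is identical with $\psi$ replaced by $\psi^a\le\psi$, carrying the extra requirement that the excursion be approximating. Reading the definition of an approximating excursion (Section~\ref{normalize}) off the horoball picture, this requirement constrains the horizontal coordinate of the highest point — at height $y>1/k$ — to lie in a sub-arc $J_y\subset\RR/\ZZ$, with $J_y=\RR/\ZZ$ for $y\ge1$ (every excursion of depth $<1$ is approximating) and, as a direct check shows, $|J_y|=2y-1$ for $\tfrac12<y<1$; thus $|J_y|=\min\{1,2y-1\}$. The same flow-box computation then gives
\[
\int_{F'}\psi^a\,d\operatorname{vol}=2\int_{1/k}^\infty\min\{1,2y-1\}\,\frac{dy}{y^2},
\]
and the elementary identity $\int_{1/k}^1(2y-1)y^{-2}\,dy+\int_1^\infty y^{-2}\,dy=(1+2\log k-k)+1$ for $1\le k\le2$ — the integral being simply $\int_{1/k}^\infty y^{-2}\,dy=k$ for $k\le1$ — identifies the right side with $2\mathcal A(k)$. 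Dividing by $2\pi\,\text{area}(S)$ yields \eqref{app}. Finally, to get one full-measure set of $\mathbf v$ valid for all $k\in(0,2]$ simultaneously, run the above over a countable dense set of parameters and sandwich, using that $k\mapsto N^*_{\mathbf v}(k)(t)$ is nondecreasing and that $k$ and $\mathcal A(k)$ are continuous and increasing.

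\emph{Where the work is.} The ergodic reduction and the flux integral are routine. The delicate points — and the reason for the restrictions on the orbifolds imposed in Section~\ref{normalize} — are: (a) that the correspondence ``$k$-excursion of $\gamma(\mathbf v)$ $\leftrightarrow$ highest point of $\tilde\gamma$ in a translate of $H_{1/k}$'' be an exact bijection even when $C_k$ overlaps itself ($1<k\le2$) and when $S$ carries order-two cone singularities (this is unnecessary for $N^a_{\mathbf v}$); and (b) pinning down the arc $J_y$, i.e.\ translating the ``loop about $P$'' condition faithfully into the geometry of $\tilde\gamma$ relative to the lifts of $P$. I expect (b) to be the main obstacle.
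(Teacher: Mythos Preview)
Your argument is correct and takes a genuinely different route from the paper's. The paper builds a cross-section $\tilde{\mathcal L}^*_k$ over the vertical geodesic $\tilde l=\{\operatorname{Re}z=0\}$, thickens it by the flow for time $\epsilon\le\delta(\Gamma)$, proves a somewhat delicate injectivity statement (Proposition~\ref{thickL}) by a case analysis on how $g(\tilde l)$ can meet $\tilde l$, and then computes the measure of the thickened section in the endpoint coordinates $(\tilde\gamma_+,\tilde\gamma_-,t)$ with $d\tilde\nu=(\pi\,\text{area}(S))^{-1}(x-y)^{-2}dx\,dy$. You instead take the transversal of horizontal vectors (highest points of lifts), unfold over $\Gamma/\Gamma_\infty$, and compute a flux integral in base-point coordinates $(x,y,\theta)$. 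Your unfolding needs only that on a full-measure set each $k$-excursion corresponds to a unique coset $\sigma\Gamma_\infty$; this is exactly the content of the paper's restriction of $\mathcal E$ (the exclusion of double tangencies), so your point~(a) is already handled there and does \emph{not} actually require the $U_\eta$ hypothesis --- in the paper that hypothesis enters only through the $\tilde l$-section of Proposition~\ref{thickL}, which you bypass entirely. Your claim $|J_y|=\min\{1,2y-1\}$ for point~(b) is correct: with the right-going sheet, the normalized lift has $\tilde\gamma_+=\{x+y\}\in(0,1)$ and $\tilde\gamma_-=\{x+y\}-2y$, and the $I$-condition $\tilde\gamma_-<-1$ reads $\{x+y\}<2y-1$; the left-going sheet gives the symmetric condition. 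Both approaches finish with the same monotonicity-and-density argument to pass from countably many $k$ to all $k\le 2$.

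What each buys: your route is shorter and more robust --- it sidesteps Proposition~\ref{thickL} and, as noted, the accompanying restriction on order-two elliptics for the non-approximating count. The paper's $\tilde l$-section, on the other hand, is not dead weight: it is precisely the object that later links approximating excursions to intersections with lifts of $l$ and hence to $\Gamma$-convergents (Lemma~\ref{frac} and Section~\ref{approximation}), so that machinery would have to be rebuilt if one used only your transversal.
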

 
 Our proof employes  a technique developed by Nakada \cite{nakada} and Stratmann \cite{strat} to count the rate at which an orbit returns to a cusp. Their idea was to define a cross-section to the flow and thicken it. Each return to the cusp is counted by a return to the thickened section and then  the ergodicity of the geodesic flow is used to determine the asymptotic rate of return. Our approach makes us of a slightly more elaborate cross-section to the flow which allows us to count, not only excursions into simple cusp neighborhoods, but approximating excursions as well as excursions into neighborhoods that overlap themselves (the area of $C_k$ is not k).   Even when there is overlap, as in the case of the Modular Surface,  the limit in (\ref{overlap}) is proportional to $k$.  
  
Since geodesics are parameterized by arc length, the distance between consecutive excursions is $ t_i-t_{i-1}$. Suppose $k\leq1$. Associated to each depth parameter $t_i$ there are boundary parameters $\delta_{2i-1}$ and $\delta_{2i}$ so that the arc of $\gamma(t)$ for $\delta_{2i-1}\leq t\leq \delta_{2i}$ is the maximal arc of $\gamma$ in $C_k$ containing $\gamma(t_i)$. It makes sense to define the length of the excursion $\gamma(t_i)$ to be  $\delta_{2i}-\delta_{2i-1} $ An immediate consequence of the theorem is

\begin{cor}\label{return}
For almost all $v\in T_1S$
\begin{itemize}
\item[{\rm (i)}]
For $k\leq 2$ the average length of an arc of $\gamma$ between consecutive $k$-excursions or approximating $k$-excursions is, respectively, $\pi \text{area}(S)\,k^{-1}$ or\\ $\pi \text{area}(S)\,\mathcal{A}(k)^{-1}. $
\item[{\rm (ii)}] For $k\leq 1$ the average  length of a $k$-excursion is $\pi.$
 
\end{itemize}
\end{cor}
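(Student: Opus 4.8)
The plan is to read both parts off Theorem \ref{counting} combined with the Birkhoff ergodic theorem for the geodesic flow, where ``average'' is understood as a Ces\`aro limit along the geodesic. Write $\{t_i\}$ for the ordered sequence of $*$-depth parameters of $\gamma=\gamma(\mathbf v)$, with the convention $t_0=0$, so that $N^*_{\mathbf v}(k)(t_n)=n$ and $t_n\to\infty$. For part (i) the average length of an arc between consecutive $k$-excursions (resp.\ approximating $k$-excursions) is
\[
\lim_{n\to\infty}\frac1n\sum_{i=1}^n (t_i-t_{i-1})=\lim_{n\to\infty}\frac{t_n}{n},
\]
and since $n=N^*_{\mathbf v}(k)(t_n)$, Theorem \ref{counting} gives $n/t_n\to k/(\pi\,\text{area}(S))$, resp.\ $\mathcal A(k)/(\pi\,\text{area}(S))$, for almost every $\mathbf v$; taking reciprocals yields $\pi\,\text{area}(S)\,k^{-1}$, resp.\ $\pi\,\text{area}(S)\,\mathcal A(k)^{-1}$. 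The choice $t_0=0$ is immaterial, since replacing it by $t_1$ changes the partial averages by $O(1/n)$.

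For part (ii) fix $k\le 1$. Then $H_{1/k}$ is precisely invariant under $\Gamma$, so it projects injectively onto $C_k$, $\text{area}(C_k)=k$, and every $k$-excursion is approximating. A maximal arc of $\gamma$ inside $C_k$ therefore lifts to a sub-arc of a single geodesic semicircle in $H_{1/k}$, along which the imaginary part is strictly unimodal; hence the sub-arc has a unique highest point, which is tangent to some $C_d$ with $d<k$ and so is a $k$-excursion. This sets up a bijection between the maximal arcs of $\gamma$ in $C_k$ and the $k$-excursions, with the $i$-th arc running from $\delta_{2i-1}$ to $\delta_{2i}$ and $t_{i-1}<\delta_{2i-1}<t_i<\delta_{2i}<t_{i+1}$. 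In particular the intervals $[\delta_{2i-1},\delta_{2i}]$, $1\le i\le n$, are exactly the maximal subintervals of $[0,\delta_{2n}]$ on which $\gamma$ lies in $C_k$, so
\[
\sum_{i=1}^n(\delta_{2i}-\delta_{2i-1})=\int_0^{\delta_{2n}}\mathbf 1_{C_k}(\gamma(s))\,ds .
\]

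Now the geodesic flow on $T_1S$ is ergodic for the normalized Liouville measure $m$, and the $m$-measure of the set of vectors with base point in $C_k$ equals $\text{area}(C_k)/\text{area}(S)=k/\text{area}(S)$, because the $S^1$-fibres carry uniform measure. Birkhoff's theorem then gives, for almost every $\mathbf v$, that $T^{-1}\int_0^T\mathbf 1_{C_k}(\gamma(s))\,ds\to k/\text{area}(S)$ as $T\to\infty$. Writing the average excursion length as
\[
\lim_{n\to\infty}\frac1n\sum_{i=1}^n(\delta_{2i}-\delta_{2i-1})=\lim_{n\to\infty}\frac{\delta_{2n}}{n}\cdot\frac1{\delta_{2n}}\int_0^{\delta_{2n}}\mathbf 1_{C_k}(\gamma(s))\,ds,
\]
the inequalities $t_n<\delta_{2n}<t_{n+1}$ together with part (i) force $\delta_{2n}/n\to\pi\,\text{area}(S)/k$, while the second factor tends to $k/\text{area}(S)$; the product is $\pi$, independent of $k$ and of $S$.

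I expect the only delicate point to be the bookkeeping in part (ii): checking that, for $k\le1$, the maximal arcs of $\gamma$ in $C_k$ exactly partition the time spent in $C_k$ and correspond one-to-one with $k$-excursions. This rests on the precise invariance of $H_{1/k}$ and the unimodality of the height function, which is precisely why the restriction $k\le1$ is imposed. The remaining ingredients — the telescoping identity, the intersection of the two full-measure sets furnished by Theorem \ref{counting} and by Birkhoff's theorem, and the computation $m(\{\text{vectors based in }C_k\})=k/\text{area}(S)$ — are routine.
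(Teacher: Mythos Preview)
Your proof is correct and follows essentially the same approach as the paper: part (i) is the telescoping identity $t_n/n$ combined with Theorem \ref{counting}, and part (ii) factors the average excursion length as (time per excursion)$\times$(fraction of time in $C_k$), evaluating the first factor via part (i) and the second via the ergodic theorem applied to $\boldsymbol{\chi}_{T_1C_k}$. The only cosmetic difference is that the paper runs the integral up to $t_n$ rather than $\delta_{2n}$, and your extra justification of the bijection between maximal arcs and excursions via unimodality of the height is a welcome clarification.
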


The assumption that $k\leq 1$  in Part (ii) is necessary to guarantee that the lifts of $C_k$ are interior disjoint, and therefore, the definition of the boundary parameters makes sense. If the order two elliptic $U_{\eta}$, defined in section \ref{normalize} does not belong to $\Gamma$ then the corollary holds for $k\leq 2$. In fact, using the methods of \cite{strat} the corollary can be proved for all $k$ where the $C_k$ are interior disjoint.

Note  that the average in Part (ii) of the corollary is independent of the parameter $k$. 
 
\subsection{Approximation by the orbit of infinity} 
Suppose the zonal Fuchsian group $\Gamma$ uniformizing $S$ is   further normalized, as  described in Section \ref{normalize}.  The orbit of infinity is   the set of $\Gamma$-{\em rational} numbers, written $\QQ(\Gamma)$, which naturally appear as \lq\lq fractions" in entries from elements of $\Gamma.$ For almost all $x\in (0,1)$ there
exist two special sequences of $\Gamma$-rational numbers
converging to $x$. The first consists of all fractions $p_n/q_n$   that 
 satisfy the familiar equation
\begin{equation}\label{q^2}
|x-\frac{ p_n}{ q_n}|< \frac{1}{ q_n^2}
\end{equation}
and are called the {\em $\Gamma$-n-convergents}. The second type, simply called the {\em $\Gamma$-convergents} of $x$ are contained as a subsequence of the first and consist of fractions $p_n^a/q_n^a$ that satisfy a condition generalizing the definition of the sequence of convergents coming from the continued fraction expansion of an irrational number.
If $\Gamma=\text{PSL}_2(\ZZ)$,  the classical modular group, then the fractions $  p_n^a/q_n^a$ are precisely the classical convergents to $x$, whereas the fractions $p_n/q_n$ are the   convergents and a subset of the nearest mediants \cite{bosma}. This will all be made more precise in the Section \ref{approximation}.

Translating the \lq\lq length between excursions" into the language of   approximation by $\Gamma$-rationals   we get:

\begin{thm}\label{0thm}
For almost all $x\in(0,1)$  
\beq\label{conv}
\lim_{n\rightarrow\infty}\frac{\log q^a_n}{n}= -\frac{1}{2}\lim_{n\rightarrow\infty}\frac{1}{n}\log |x-\frac{p^a_n}{q^a_n}|= \frac{ \pi\, \text{area}(S)}{4\log 2}
\eeq
and
\beq\label{conv2}
\lim_{n\rightarrow\infty}\frac{\log q_n}{n}=-\frac{1}{2}\lim_{n\rightarrow\infty}\frac{1}{n}\log |x-\frac{p_n}{q_n}|=\frac{1}{4}\pi\, \text{area}(S).
\eeq
 
\end{thm}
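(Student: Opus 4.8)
\emph{Proof outline.} The plan is to read off Theorem \ref{0thm} from Theorem \ref{counting} (equivalently Corollary \ref{return}) via the dictionary of Section \ref{approximation}. Fix an irrational $x\in(0,1)$ and a unit vector $\mathbf v$ whose geodesic $\gamma=\gamma(\mathbf v)$ has forward endpoint $x$ when lifted to the upper half-plane; write $\tilde\gamma$ for the lift, a Euclidean semicircle with one endpoint $x$ and the other some $y$ (that endpoint, and the basepoint $\gamma(0)$, being fixed once and for all and playing no role in what follows). By Section \ref{approximation} the forward $2$-excursions of $\gamma$ are exactly the $\Gamma$-$n$-convergents $p_n/q_n$ of $x$, and the approximating $2$-excursions are exactly the $\Gamma$-convergents $p^a_n/q^a_n$: a $\Gamma$-rational $p/q$ is a $\Gamma$-$n$-convergent of $x$ precisely when $\tilde\gamma$ dips into the lift of $C_2$ based at $p/q$, the horoball of Euclidean diameter $2q^{-2}$ at $p/q$, which happens iff $|x-p/q|<q^{-2}$, recovering (\ref{q^2}). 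Thus the $n$-th convergent of either kind is matched with the $n$-th excursion of the corresponding kind; denote its depth parameter $t_{i_n}$ and its depth $d_n$. Since $N^{*}_{\mathbf v}(2)(t_{i_n})=n$ and $t_{i_n}\to\infty$, Theorem \ref{counting} (with $k=2$, so $\mathcal A(2)=2\log 2$) gives, for almost every such $x$,
\[
\frac{t_{i_n}}{n}\longrightarrow\frac{\pi\,\text{area}(S)}{2}\quad\text{(the $\Gamma$-$n$-convergents)},\qquad
\frac{t_{i_n}}{n}\longrightarrow\frac{\pi\,\text{area}(S)}{2\log 2}\quad\text{(the $\Gamma$-convergents)}.
\]

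The second ingredient is an elementary computation relating $\log q^{*}_n$ to $t_{i_n}$. Along $\tilde\gamma$ the Euclidean height is $\operatorname{Im}\tilde\gamma(t)=\tfrac12|x-y|\operatorname{sech}(t-t_0)$ for a constant $t_0$, so $\log\operatorname{Im}\tilde\gamma(t)=-t+\log|x-y|+t_0+o(1)$ as $t\to\infty$. Writing $g_n\in\Gamma$ with $g_n(\infty)=p^{*}_n/q^{*}_n$ and lower-left entry $q^{*}_n$, the $n$-th excursion takes place inside $g_n(H_{1/2})$, a horoball based at $p^{*}_n/q^{*}_n$ of Euclidean diameter $2(q^{*}_n)^{-2}$, and $d_n$ is the value for which $\tilde\gamma$ is tangent to $g_n(H_{1/d_n})$, the horoball of diameter $d_n(q^{*}_n)^{-2}$. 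Tangency of the semicircle $\tilde\gamma$ against these horoballs yields simultaneously
\[
\Bigl|x-\frac{p^{*}_n}{q^{*}_n}\Bigr|=\frac{d_n}{2(q^{*}_n)^{2}}\bigl(1+o(1)\bigr)
\qquad\text{and}\qquad
\operatorname{Im}\tilde\gamma(t_{i_n})=\frac{d_n}{2(q^{*}_n)^{2}}\bigl(1+o(1)\bigr),
\]
with errors $O\bigl(|x-p^{*}_n/q^{*}_n|\bigr)$; for the cusp ray $\{\operatorname{Re}=x\}$ these identities are exact. Feeding the second into the height asymptotics gives $t_{i_n}=2\log q^{*}_n-\log d_n+O(1)$, and the first gives $\log|x-p^{*}_n/q^{*}_n|=-2\log q^{*}_n+\log d_n+O(1)$.

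It remains to dispose of $\log d_n$. The depths $d_n\in(0,2]$ carry, for almost every geodesic, the limiting distribution computed in \cite{haas}; since that distribution has no atom at $0$ and thin tails there (so that $\sum_n\mathbb P(d_n<e^{-\varepsilon n})<\infty$ for every $\varepsilon>0$), Borel--Cantelli gives $d_n\ge e^{-\varepsilon n}$ eventually, whence $\tfrac1n\log d_n\to0$ almost surely. Consequently $\log q^{*}_n=\tfrac12 t_{i_n}+o(n)$ and $\log|x-p^{*}_n/q^{*}_n|=-t_{i_n}+o(n)$ almost surely; dividing by $n$ and combining with the two limits above yields (\ref{conv}) and (\ref{conv2}), since $\tfrac12\cdot\tfrac{\pi\,\text{area}(S)}{2\log 2}=\tfrac{\pi\,\text{area}(S)}{4\log 2}$ and $\tfrac12\cdot\tfrac{\pi\,\text{area}(S)}{2}=\tfrac14\pi\,\text{area}(S)$. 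Finally, Theorem \ref{counting} and the distribution of \cite{haas} hold for almost every $\mathbf v\in T_1S$, and a standard disintegration of the Liouville measure along the forward-endpoint coordinate transfers ``almost every $\mathbf v$'' to ``almost every $x\in(0,1)$''.

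The main obstacle is not a single estimate but the bookkeeping of the first paragraph: one must verify, on the strength of Section \ref{approximation}, that counting $\Gamma$-(n-)convergents up to $p^{*}_n/q^{*}_n$ agrees index-for-index with counting (approximating) $2$-excursions up to $\gamma(t_{i_n})$, and that the depth $d_n$ of the matched excursion is tied to $|x-p^{*}_n/q^{*}_n|$ as stated. Granting that dictionary, the only substantive probabilistic input beyond Theorem \ref{counting} is the control $\tfrac1n\log d_n\to0$, for which the distribution of depths from \cite{haas} is exactly what is required.
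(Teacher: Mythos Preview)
Your overall architecture matches the paper's: relate $-\frac{1}{n}\log|x-p^*_n/q^*_n|$ to the average arc length between consecutive $*2$-excursions (a telescoping sum), read off that average from Theorem~\ref{counting} with $k=2$, and then pass to $\frac{1}{n}\log q^*_n$ by showing $\frac{1}{n}\log d_n\to 0$. The paper in fact works with the vertical ray $\tilde\beta_x$, where your tangency identities are exact rather than $1+o(1)$, and devotes a separate result (Theorem~\ref{next} and Proposition~\ref{prop}) to the passage from ``almost every $\mathbf v\in T_1S$'' to ``almost every $x\in(0,1)$''; your one-line ``standard disintegration'' hides the point that $N^*_{\mathbf v}(k)(t)$ is a property of the whole geodesic, not just its forward endpoint, and one must argue (as the paper does) that asymptotic geodesics share the relevant limits.

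The substantive divergence is the step $\frac{1}{n}\log d_n\to 0$. The paper obtains this from Theorem~\ref{theta} (equivalently Theorem~\ref{2thm}), which is the discrete logarithm law built on Sullivan's result; it explicitly flags this as essential. Your Borel--Cantelli argument is not justified as written: the result you cite from \cite{haas} is that the \emph{empirical} distribution of $\{d_i\}_{i\le n}$ along a generic geodesic converges to $F^*$, and that says nothing about $\mathbb P(d_n<e^{-\varepsilon n})$ for fixed $n$. What would make your argument work is a different (and easier) observation: on the cross-section $\mathcal L_2^*$ with its invariant measure $\tilde\nu$, the first-return map is measure-preserving, so $d_n=d\circ T^{n-1}$ has the \emph{same} law as $d$ for every $n$, namely $F^*$; hence $\tilde\nu(d_n<e^{-\varepsilon n})=F^*(e^{-\varepsilon n})\le C e^{-\varepsilon n}$ is summable and first Borel--Cantelli applies. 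If you make that explicit and then carry out the ``a.e.\ $x$'' transfer carefully, your route actually bypasses Sullivan's logarithm law and is more elementary than the paper's; but as stated, the inference from ``limiting empirical distribution has thin tails'' to ``$\sum_n\mathbb P(d_n<e^{-\varepsilon n})<\infty$'' is a gap.
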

The first limit (\ref{conv}) is due to Levy  in the classical case of continued fraction convergents \cite{bjw, sinai}. Since the area of the modular surface is $\pi/3$, the limits (\ref{conv}) take the value $\pi^2/(12\log 2)$.  This is the the  Khinchin-Levy constant. From our point of view, this constant is one half the average distance between approximating excursions on the modular surface.
The second limit (\ref{conv2}) of the theorem appears to be new even in the classical case. A proof using more traditional methods would be welcome.

 Given $x\in (0,1)$ and any one of the sequences of $\Gamma$-approximants  $p^*_n/q^*_n$   defined earlier, define the associated approximation constants  $\theta^*_n(x) = q^*_n|q^*_nx-p^*_n|$, \cite{bosma, bjw, haas}.   Then it is possible to show that
 \begin{thm}\label{theta}
For almost all $x\in (0,1)$
\begin{equation*}
\limsup_{n\rightarrow\infty} -\frac{\log \theta^*_n(x)}{\log n}=1.
\end{equation*}
\end{thm}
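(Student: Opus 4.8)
The plan is to translate the statement about the approximation constants $\theta^*_n(x) = q^*_n|q^*_n x - p^*_n|$ into a Borel--Cantelli style estimate, using the dictionary between $\Gamma$-approximants and approximating $k$-excursions set up in Section \ref{approximation} together with the counting asymptotics of Theorem \ref{counting}. The key observation is that the size of $\theta^*_n(x)$ measures the \emph{depth} of the corresponding excursion: a $\Gamma$-approximant $p^*_n/q^*_n$ with $\theta^*_n(x)$ small corresponds to a loop of $\gamma$ about $P$ that penetrates deep into a cusp neighborhood $C_d$ with $d$ comparable to $\theta^*_n(x)$ (up to the normalization in which $C_k$ has area $k$ for $k<1$). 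Thus $\theta^*_n(x) < \varepsilon$ for the $n$-th approximant corresponds exactly to the $n$-th approximating excursion being an approximating $k$-excursion for $k=k(\varepsilon)\to 0$ as $\varepsilon\to 0$.

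First I would make this correspondence quantitative: fix the normalization of $\Gamma$ as in Section \ref{normalize}, and show that $\theta^*_n(x)\le k$ if and only if the $n$-th approximating excursion along $\gamma(\mathbf{v})$ lies in $\Pi^a_{\mathbf v}(ck)$ for an explicit constant $c$ depending only on the normalization (for the modular group this is the standard relation between $\theta_n$ and the depth of the corresponding excursion into the cusp of the modular surface). Next, I would reformulate $\limsup_n -\log\theta^*_n(x)/\log n = 1$ as the assertion that for every $\delta>0$: (i) $\theta^*_n(x) < n^{-1+\delta}$ for infinitely many $n$, and (ii) $\theta^*_n(x) \ge n^{-1-\delta}$ for all large $n$. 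Via the dictionary, (i) says that among the first $N$ approximating excursions, there are infinitely many lying in $C_{k_N}$ with $k_N \sim N^{-1+\delta}$, while (ii) says that eventually no approximating excursion among the first $N$ penetrates past depth $\sim N^{-1-\delta}$.

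To prove (i) and (ii) I would use Theorem \ref{counting}, equation (\ref{app}): for fixed $k$, the number of approximating $k$-excursions up to arc length $t$ grows like $\mathcal A(k)\,t/(\pi\,\mathrm{area}(S)) = k\,t/(\pi\,\mathrm{area}(S))$ for $k\le 1$. Since the $n$-th approximating excursion occurs at arc length $t_n \sim \pi\,\mathrm{area}(S)\cdot n / \mathcal A(k)$ — wait, more precisely, the total number of approximating excursions (the $k=1$ or $k=2$ count) up to $t$ is $\sim t/(\pi\,\mathrm{area}(S))$, so the $n$-th one sits at $t_n\sim \pi\,\mathrm{area}(S)\cdot n$. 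Then the expected number of the first $n$ approximating excursions that are also approximating $k$-excursions is $\sim k\, t_n/(\pi\,\mathrm{area}(S)) \sim k n$. Setting $k = n^{-1\pm\delta}$ gives an expected count of $n^{\pm\delta}$, which diverges for the $+$ sign and is summable (after passing to a sparse subsequence $n_j = 2^j$ and using monotonicity of the counting function) for the $-$ sign. A conditional (dynamical) Borel--Cantelli argument — the events "the excursion is deep" being, for a fixed geodesic, governed by the equidistribution already exploited in the proof of Theorem \ref{counting} — then yields (i), and a direct first-moment/union bound over the sparse subsequence yields (ii).

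The main obstacle is the Borel--Cantelli step for (i): the events $\{\theta^*_n(x)<n^{-1+\delta}\}$ are neither independent nor drawn from a fixed distribution, and the relevant parameter $k$ shrinks with $n$, so one cannot directly cite a single ergodic-theorem limit. The cleanest route is probably to bypass full Borel--Cantelli and instead argue directly from the distributional result of \cite{haas} governing the sequence of depths of maximal incursions: that paper establishes a limiting distribution for the depths $d_i$ of successive excursions (independent of $S$), and $\limsup -\log d_i/\log i = 1$ for a.e.\ geodesic is then a soft consequence of the tail behavior of that distribution (the depth distribution has the heavy tail $\mathbb P(d > s) \asymp s$ near $0$ in the relevant normalization, which is exactly what makes the $\limsup$ equal $1$). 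Reconciling the $\theta^*_n$ normalization with the depth normalization of \cite{haas}, and handling the elliptic-order-2 complications flagged after Corollary \ref{return}, are the remaining technical points, but they do not affect the value of the limit.
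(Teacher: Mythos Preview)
Your identification of the key dictionary is correct and matches the paper exactly: one shows $\theta^*_n(x)=\tfrac{1}{2}D^*(\mathbf v_x)(n)$, so the theorem reduces to the geometric statement
\[
\limsup_{n\to\infty}\,-\frac{\log D^*(\mathbf v)(n)}{\log n}=1
\]
for almost every $\mathbf v$ (Theorem~\ref{2thm}), together with the transfer from generic $\mathbf v$ to the vertical vectors $\mathbf v_x$ (Theorem~\ref{next}).

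Where your proposal diverges from the paper, and where it runs into trouble, is in the proof of that geometric $\limsup$. The paper does \emph{not} run a Borel--Cantelli argument on the depths; instead it imports Sullivan's continuous-time logarithm law $\limsup_{t\to\infty}\mathrm{dist}(\mathbf v)(t)/\log t=1$ as a black box, observes via an elementary lemma that $\mathrm{dist}(\mathbf v)(\tau_i)$ and $-\log D^*(\mathbf v)(i)$ are asymptotically equal along the excursion times $\tau_i$, and then uses Theorem~\ref{counting} only in the very mild form $\log\tau_i/\log i\to 1$ to swap $\log\tau_i$ for $\log i$. No shrinking-target estimate is needed.

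Your route, by contrast, needs exactly such an estimate, and the sketch does not supply it. Theorem~\ref{counting} gives the asymptotic count of $k$-excursions only for \emph{fixed} $k$; it says nothing about $k=k_n\to 0$, so the ``expected count $\sim kn$'' heuristic cannot be upgraded to an almost-sure statement from that theorem alone. Your fallback --- inferring $\limsup -\log d_i/\log i=1$ from the existence of a limiting empirical distribution of the depths with tail $\sim s$ --- is a genuine non-sequitur: convergence of the empirical distribution controls the proportion of $d_i$'s in \emph{fixed} intervals, not the occurrence of the rare events $\{d_i<i^{-1\pm\delta}\}$. (An i.i.d.\ sequence with the same limiting law would satisfy your conclusion by classical Borel--Cantelli, but the $d_i$ are not i.i.d., and the empirical-distribution result does not substitute for independence here.) Similarly, the ``first-moment/union bound'' for part~(ii) presupposes a uniform-in-$n$ estimate on the measure of $\{x:\theta^*_n(x)<n^{-1-\delta}\}$, which is not available in the paper.

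In short: the translation step is right, but the hard half of the argument is precisely Sullivan's shrinking-target theorem, and the paper invokes it rather than reproving it. Your Borel--Cantelli outline would amount to reproving that theorem and, as written, is missing the crucial quantitative input.
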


The proof involves a discretization of Sullivan's logarithm law for geodesics \cite{sullivan}. It should be emphasized that this result will be needed to equate the limits in (\ref{conv}) and (\ref{conv2}) and complete the proof of Theorem \ref{0thm}.
In the classical setting the theorem can be further translated into a result about the sequence of partial quotients of a generic continued fraction. More specifically,  suppose $x\in (0,1)$ has the uniquely
defined,  infinite continued fraction expansion 
$$x=[a_1, a_2, \ldots]= 
  \frac{1}{a_1+\frac{1}{a_2+\frac{}{a_3+ \dots}}}.
$$
   The positive integers $a_i$ are called the partial quotients of the continued fraction expansion  \cite{HW}. The convergents are $\frac{p^a_n}{q^a_n}=[a_1,\ldots, a_{n}].$ 

\begin{cor}\label{classical}
For almost all $x=[a_1, a_2,\ldots]\in (0,1)$
\begin{equation*}
\limsup_{n\rightarrow\infty} \frac{\log a_n}{\log n}=1.
\end{equation*}
\end{cor}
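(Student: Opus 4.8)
The plan is to read the corollary off from Theorem~\ref{theta} by specializing $\Gamma=\mathrm{PSL}_2(\ZZ)$ and $*=a$, and then translating the size of the classical approximation constant $\theta^a_n(x)=q_n|q_nx-p_n|$ into the size of the next partial quotient $a_{n+1}$. Recall from Section~\ref{approximation} that for the modular group the $\Gamma$-convergents $p^a_n/q^a_n$ are exactly the classical continued fraction convergents $p_n/q_n=[a_1,\dots,a_n]$, so Theorem~\ref{theta} immediately gives, for almost every $x\in(0,1)$,
\[
\limsup_{n\to\infty}\;\frac{-\log\theta^a_n(x)}{\log n}=1 .
\]

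The next step is the classical identity for the convergents of a continued fraction. Writing $\alpha_{n+1}=[a_{n+1},a_{n+2},\dots]$ for the complete quotient and $\beta_{n+1}=q_{n-1}/q_n=[0;a_n,a_{n-1},\dots,a_1]$, a one-line computation using $p_n=a_np_{n-1}+p_{n-2}$ and $q_n=a_nq_{n-1}+q_{n-2}$ gives
\[
\theta^a_n(x)=q_n^2\Bigl|x-\frac{p_n}{q_n}\Bigr|=\frac{1}{\alpha_{n+1}+\beta_{n+1}} .
\]
Since $a_{n+1}<\alpha_{n+1}<a_{n+1}+1$ and $0<\beta_{n+1}\le 1$, this yields the two-sided bound
\[
\log a_{n+1}\;<\;-\log\theta^a_n(x)\;<\;\log(a_{n+1}+2)\;\le\;\log a_{n+1}+\log 3 .
\]

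Dividing through by $\log n$ and letting $n\to\infty$, the two outer quantities have the same $\limsup$ because their difference is $O(1/\log n)$; hence $\limsup_{n\to\infty}(\log a_{n+1})/\log n=\limsup_{n\to\infty}\bigl(-\log\theta^a_n(x)\bigr)/\log n=1$ for almost every $x$. A harmless reindexing $m=n+1$, together with $\log(m-1)/\log m\to 1$ and the finiteness of the limsup, turns the left-hand side into $\limsup_{m\to\infty}(\log a_m)/\log m$, which is the assertion of the corollary.

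There is no real analytic obstacle here: the corollary is essentially a dictionary translation of Theorem~\ref{theta}. The two points that require care are purely bookkeeping. First, one must confirm that the $\Gamma$-convergents for $\Gamma=\mathrm{PSL}_2(\ZZ)$ really are the classical convergents, so that $\theta^a_n$ is the classical approximation constant and not the quantity attached to the larger, mediant-enriched sequence $p_n/q_n$ that also appears in Theorem~\ref{theta}. Second, one must keep the index shift $n\leftrightarrow n+1$ (and the attendant $\log n$ versus $\log(n+1)$) from contaminating the $\limsup$. Both are immediate once the definitions of Section~\ref{approximation} are in hand.
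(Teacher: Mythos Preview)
Your argument is correct and is essentially the same as the paper's first proof: you invoke Theorem~\ref{theta} for the modular group, use the classical identity $\theta^a_n(x)=1/(\alpha_{n+1}+q_{n-1}/q_n)$ to sandwich $-\log\theta^a_n$ between $\log a_{n+1}$ and $\log(a_{n+1}+2)$, and divide by $\log n$. The paper also records an alternative classical proof via the Borel--Bernstein theorem, but the route you took matches its primary derivation.
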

 The corollary can also be derived using  more traditional methods. We shall describe both proofs  in Section \ref{cor pf}.

Theorem \ref{counting} can also be used to derive the following averages. 

\begin{thm}\label{thetasums}
For almost all $x\in (0,1)$
\begin{itemize}
\item[{\rm (i)}]
\beqq\label{theta1}
\lim_{ n \rightarrow\infty}\frac{1}{n}\sum_{i=1}^n  \theta_i^*(x)=
\left\{ \begin{array}{ll}
1/2 & \mbox{if\, *=nothing}\\
 1/(4\log 2)  & \mbox{if *=a }
\end{array}
\right.
\eeqq
\item[{\rm (ii)}]

\beqq\label{a}
\lim_{ n \rightarrow\infty}\frac{1}{n}\sum_{i=1}^n  \log \theta_i^*(x)= 
\left\{ \begin{array}{ll}
- 1 & \mbox{if\, *=nothing}\\
-\frac{1}{2}\log 2 -1  & \mbox{if *=a }
\end{array}
\right.
\eeqq
\end{itemize}
\end{thm}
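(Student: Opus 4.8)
The plan is to extract Theorem~\ref{thetasums} from Theorem~\ref{counting}: the counting functions $N^*_{\mathbf v}(k)(t)$, regarded as functions of $k$, carry the limiting distribution of the \emph{depths} of the successive excursions along a generic geodesic, and the two sums in Theorem~\ref{thetasums} are just the integrals of $\theta\mapsto\theta$ and $\theta\mapsto\log\theta$ against this distribution. To set this up I would first invoke the dictionary of Section~\ref{approximation}. Fix $x\in(0,1)$ whose associated geodesic $\gamma=\gamma(\mathbf v)$ lies in $\mathcal E$ and whose backward endpoint $y$ is irrational with $y\neq x$ (this holds for a.e.\ $x$). Let $p^*_i/q^*_i$ be the $\Gamma$-convergents ($*=a$) or $\Gamma$-$n$-convergents ($*=$ nothing) of $x$, listed in the order in which the corresponding approximating excursions, resp.\ $k$-excursions with $k=2$, occur along $\gamma$, and let $d^*_i\in(0,2)$ be the depth of the $i$-th such excursion. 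Applying the element of $\Gamma$ that carries $p^*_i/q^*_i$ to $\infty$, a short matrix computation shows the maximal height reached on the corresponding loop equals $|x-y|\big/\bigl(2(q^*_i)^2|x-\tfrac{p^*_i}{q^*_i}||y-\tfrac{p^*_i}{q^*_i}|\bigr)$, hence
\[
d^*_i \;=\; 2\,\theta^*_i(x)\cdot\frac{\bigl|y-\tfrac{p^*_i}{q^*_i}\bigr|}{|x-y|}.
\]
Since $p^*_i/q^*_i\to x$ with $y$ fixed, the correction factor tends to $1$; as $\theta^*_i(x)<1$ for all $i$ by \eqref{q^2}, the sequences $d^*_i$ and $\theta^*_i(x)$ are uniformly bounded and $d^*_i-2\theta^*_i(x)\to0$, and likewise $\log\theta^*_i(x)-\log(d^*_i/2)\to0$ with uniformly bounded terms. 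By Ces\`aro summation it therefore suffices to prove both limits with $\theta^*_i(x)$ replaced by $d^*_i/2$.

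Next I would convert Theorem~\ref{counting} into a statement about the empirical distribution of $\{d^*_i\}$. Let $t^*_n$ be the $n$-th element of $\Pi^*_{\mathbf v}(2)$; since every $*$-type excursion has depth $<2$, one has, for each $k\in(0,2)$ not equal to some $d^*_i$, the exact identities $\#\{i\le n:d^*_i<k\}=N^*_{\mathbf v}(k)(t^*_n)$ and $n=N^*_{\mathbf v}(2)(t^*_n)$, so
\[
\frac1n\,\#\{i\le n:d^*_i<k\}=\frac{N^*_{\mathbf v}(k)(t^*_n)/t^*_n}{N^*_{\mathbf v}(2)(t^*_n)/t^*_n}\;\longrightarrow\;F^*(k),\qquad F(k)=\frac k2,\quad F^a(k)=\frac{\mathcal A(k)}{2\log 2},
\]
using $\mathcal A(2)=2\log 2$; note the factor $\pi\,\text{area}(S)$ cancels, which is why the eventual constants are universal. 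Because $k\mapsto N^*_{\mathbf v}(k)(t)$ is nondecreasing and the limits $F^*$ are continuous, this convergence is uniform on $[0,2]$ (apply Theorem~\ref{counting} along a countable dense set of $k$ and interpolate, as in P\'olya's argument); in particular, since $F^*(u)\le Cu$ for $u\le1$, there is a bound $\#\{i\le n:d^*_i<u\}\le C'nu$ valid for all $u\le1$ and all large $n$.

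For part (i) this essentially finishes things: the empirical measures $\tfrac1n\sum_{i\le n}\delta_{d^*_i}$ converge weakly to $dF^*$ and sit in the compact interval $[0,2]$, so $\tfrac1n\sum_{i\le n}d^*_i\to\int_0^2 k\,dF^*(k)$, which is $1$ for $*=$ nothing and $\tfrac1{2\log2}$ for $*=a$; halving gives $\tfrac12$, resp.\ $\tfrac1{4\log2}$. For part (ii) the integrand $\log k$ is unbounded near $0$, so I would fix a small $\delta>0$ and split the sum: on $[\delta,2]$ the function $\log$ is bounded and continuous, so weak convergence gives $\tfrac1n\sum_{i\le n,\,d^*_i\ge\delta}\log d^*_i\to\int_{[\delta,2]}\log k\,dF^*(k)$, while a Stieltjes integration by parts against the uniform bound above yields
\[
0\le \frac1n\sum_{i\le n,\ d^*_i<\delta}\bigl(-\log d^*_i\bigr)\le C'\,\delta\,(1-\log\delta),
\]
which tends to $0$ as $\delta\to0$, uniformly in $n$. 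Since $\int_0^2|\log k|\,dF^*(k)<\infty$ ($F^*$ has bounded density), letting $\delta\to0$ gives $\tfrac1n\sum_{i\le n}\log d^*_i\to\int_0^2\log k\,dF^*(k)$, hence $\tfrac1n\sum_{i\le n}\log\theta^*_i(x)\to\int_0^2\log(k/2)\,dF^*(k)$; the elementary integrals evaluate to $-1$ for $*=$ nothing and to $-1-\tfrac12\log2$ for $*=a$.

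The main obstacle is exactly the tail control in part (ii): one must rule out that a sparse family of very deep excursions (arbitrarily small $\theta^*_i$) distorts the Ces\`aro average of $\log\theta^*_i$. Here Theorem~\ref{theta} is too crude on its own, since $\limsup_n(-\log\theta^*_n)/\log n=1$ only yields the useless estimate $\tfrac1n\sum_{i\le n}(-\log\theta^*_i)=O(\log n)$; what one really needs is the uniform form of the convergence in Theorem~\ref{counting} as $k\to0$, which is available precisely because the counting functions are monotone in $k$ with continuous limiting profile $\mathcal A$ and $\mathcal A(k)=k$ near $0$. A secondary, routine point is the reduction $\theta^*_i\rightsquigarrow d^*_i/2$: one must check that the correction factor $|y-p^*_i/q^*_i|/|x-y|$ stays bounded away from $0$ and $\infty$, which is immediate once the backward endpoint $y$ of a generic geodesic is known to be irrational and $\neq x$.
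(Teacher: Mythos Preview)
Your strategy matches the paper's: obtain the limiting distribution $F^*$ of the depths from Theorem~\ref{counting}, integrate $k$ and $\log k$ against it (this is Theorem~\ref{before} in the paper), and then pass from depths to $\theta^*_i$. The paper carries out the last step via the \emph{exact} identity $\theta^*_j(x)=\tfrac12 D^*(\mathbf v_x)(j)$ coming from the vertical ray $\tilde\beta_x$ (equation~\eqref{t*}), together with Theorem~\ref{next}, which certifies that the special vectors $\mathbf v_x$ are generic for almost every $x$. Your semicircular lift with correction factor $|y-p^*_i/q^*_i|/|x-y|\to1$ is a legitimate variant, though it does not really sidestep the asymptotic comparison underlying Theorem~\ref{next}: you still need the excursions along your $\gamma$ to match, eventually, the $\Gamma$-convergents of $x$, which by definition are read off $\beta_x$.

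There is one genuine gap in your tail argument. From uniform convergence $G_n\to F^*$ on $[0,2]$ together with $F^*(u)\le Cu$ you claim $G_n(u)\le C'u$ for all $u\le1$ and all large $n$. This does not follow: P\'olya's argument only yields $G_n(u)\le Cu+\varepsilon_n$ with $\varepsilon_n\to0$, which says nothing when $u\ll\varepsilon_n$, and your Stieltjes bound $\tfrac1n\sum_{d^*_i<\delta}(-\log d^*_i)\le C'\delta(1-\log\delta)$ then unravels, since $\int_0^\delta G_n(u)\,u^{-1}\,du$ picks up an uncontrolled contribution $\varepsilon_n\log(\delta/m_n)$ with $m_n=\min_{i\le n}d^*_i$. (The paper is equally brief at this point, simply asserting in the proof of Theorem~\ref{before} that the Ces\`aro average of $\log D^*$ equals the expectation against $H^*_{\mathbf v}$.) The clean repair is to bypass the empirical-distribution route for part~(ii) and apply Birkhoff directly to the first-return map on the cross-section $\mathcal L^*_2$, which is ergodic because the geodesic flow is: on $\tilde{\mathcal L}^*_2$ the depth is $D(x,y)=2/|x-y|$, and one checks from the explicit form of $\tilde\nu$ on $\Delta_2$ (resp.\ $\Omega_2$) that $\log D\in L^1$, whence $\tfrac1n\sum_i\log d^*_i\to\int_0^2\log k\,dF^*(k)$ for almost every starting point, with no separate tail estimate needed.
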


   In the classical case, all of these averages  either appear explicitly in the literature or are easily derived from know results, \cite{bosma, bjw, haas1}.
\\

\subsection{The structure of the paper}  
The paper is organized as follows. At the start of Section \ref{section2} the basic notions and definitions that were introduced in Section \ref{intro} shall be made more precise. We then define sections of the unit tangent bundle that count excursions and approximating excursions along geodesics. The relative time a geodesic spends in a thickened version of one of the sections is shown to be proportional to the rate at which it makes excursions or approximating excursions into the cusp. In this way, after computing the area of the thickened section, Theorem \ref{counting} is proved. In the remainder of the section we prove Corollary \ref{return} and then begin laying the foundation for the proof of Theorem \ref{0thm}.
In Section \ref{xdepth} we prove the geometric versions of Theorems \ref{theta} and  \ref{thetasums}.  Finally, in Section \ref{approximation} the geometric results are applied to the problem of $\Gamma$-rational approximation. \\

\section{Geodesic Excursions}\label{section2}

 \subsection{Group normalization, geodesics  and excursions}\label{normalize}

Recall the definition of the horocycle  $H_{m } =\{z\,|\, \text{Im}(z)>m\}$.  Let $\eta=\eta(\Gamma)$ be the smallest number so that $H_{ \eta }\cap g(H_{ \eta })=\emptyset$ for $g\not \in \text{Stab}(\infty)$, the stabilizer of infinity in $\Gamma.$  Normalize the group so that there exists $h\in \Gamma$ with $h(\infty)=0$  and so that the closures of the horocycles $H_{ \eta }$ and    $h(H_{ \eta })$  are tangent at a single
    point $ \eta\text{i}$ for i$=\sqrt{-1}.$ 
      Then  there is  no overlap and the area of $C_k$ is $k$ if and only if $k<\frac{1}{\eta}$. It is well known that $ \eta\leq 1$ and one can show that for any group other than the classical modular group $\eta<1.$ Furthermore, if $\Gamma$ does not contain an order two elliptic fixing  $\eta\text{i}$, then it is easy to prove that $\eta\leq 1/2.$
     
 $\Gamma$ acts on the hyperbolic upper-half space $\HH$ so that the quotient $\HH/ \Gamma$ is the 2-orbifold $S$. Let $\pi$ denote the quotient map and let $\pi_*$ be the induced map from the unit tangent bundle $T_1\HH$ to $T_1S.$ Similarly, $g\in \Gamma$ induces an isometry $g_*$ of $T_1\HH$. This use of the subscript $*$ is distinct from its use as a superscript, where it denotes either $a$ or nothing.
 
  Let   $\tilde{l}_{\tau}=\{z\,|\,\text{Re}(z)=\tau\}$ and write  $\tilde{l}$ for $\tilde{l}_0$. Let $\pi(\tilde{l})=l$ be the projection to $S.$
Note that because of our normalization of $\Gamma$, $l$ is simple.
Set $\mathscr{L}=\{g(\tilde{l})\, |\, g\in\Gamma\},$ the full set of lifts of $l$ to $\HH.$
 
 With $\eta=\eta(\Gamma),\, U_{\eta}(z)=-\frac{\eta^2}{z}$  is the order two elliptic, fixing $i\eta$, interchanging 0 and $\infty$ and taking $\tilde{l}$ to itself. If $U_{\eta}\in \Gamma$ then the only geodesics in $\mathscr{L} $ with an endpoint at infinity have the form $\tilde{l}_m,\,m\in\ZZ$. If $U_{\eta}\not\in\Gamma$ then there exists a  number $\tau_0,$ which is not an integer, so that $\tilde{l}_{\tau}\in\mathscr{L}$ if and only if $\tau=m$ or $\tau=\tau_0+m$ for $m\in\ZZ$. In particular, if $g\in\Gamma$ takes 0 to infinity then $g(\tilde{l})=\tau_0+m$ for $m\in\ZZ$. Henceforth, we suppose that $\tau_0$ is the smallest positive number for which the above holds. If $U_{\eta}\in \Gamma$ then one of the endpoints of $l$ is a cone point, which is the projection of the fixed point of $U_{\eta}.$

Let   $\gamma:(-\infty,\infty)\rightarrow S$  be a geodesic  parameterized by arc length.
A lift $\tilde{\gamma}$ of $\gamma$ to $\HH$
  has endpoints $\tilde{\gamma}_+$ and   $\tilde{\gamma}_-,$
corresponding to $\tilde{\gamma}(t)$ as $t\rightarrow \infty$ and  $t\rightarrow -\infty,$ respectively. 
  If the domain of $\gamma$ is restricted to $[0,\infty)$ then we shall
refer to it as a geodesic ray, which then has the single asymptotic endpoint $\tilde{\gamma}_+$ at infinity.
 

In order to clarify the definition of a $k$-excursion and it's depth,  we  further restrict the set of tangent vectors $\mathcal{E}$.    To this end, let $v\in T_1(S)$ and set $\gamma =\gamma({\mathbf v})$. Suppose there are numbers $d_1\leq d_2< k$ and lifts $\tilde{\gamma}_1$ and      $\tilde{\gamma}_2$  of $\gamma$, tangent to $H_{\frac{1}{d_1}}$ and     $H_{\frac{1}{d_1}}$ respectively, at   lifts of a fixed excursion $e$. If   ${\mathbf v}\in\mathcal{E}$, then we shall require that  $d_1=d_2$ and  $\tilde{\gamma}_2 =g(\tilde{\gamma}_1) $ for some $g\in \text{Stab}(\infty)$.  This essentially says that $\gamma$ cannot be tangent to two horocyclic neighborhoods in $C_k$ at the same point.   It is easy to show that the excluded set of vectors is a two dimensional subset of the unit tangent bundle and therefore has measure zero in the natural measure on $T_1(S)$; see Section \ref{its inv}.

 Define the two planar sets $J=[(-1,0)\times (0,\infty)]\cup
[(0,1)\times (-\infty,0)]$ and $I=[(-1,0)\times (1,\infty)]\cup
[(0,1)\times (-\infty,-1)].$

Given a $k$-excursion $e=\gamma(t_e)$ of the geodesic $\gamma$ with $k\leq 2$, define  $\tilde{\gamma}=\tilde{\gamma}_e$  to be the   
unique lift
of $\gamma$ to $\HH$
so that $\tilde{\gamma}$ is tangent to $H_{\frac{1}{d(e)}}$ at a
point $\tilde{\gamma}(t_e)=\tilde{e}$ and $(\tilde{\gamma}_+, \tilde{\gamma}_-)\in J.$  

  The $k$-excursion $e$ is defined to be an {\it approximating} $k$-excursion if
  $( \tilde{\gamma}_+,
\tilde{\gamma}_-)\in I.$     In the classical case,  the approximating excursions into the cusp on the modular surface play a significant role through their association with the continued fraction convergents. This was a recurrent theme in \cite{haas} and we shall return to it in Section 4. Observe that an $h$-excursion for $h\leq 1$, is   an
approximating $k$-excursion for every $h\leq k\leq 2.$

When   considering $k$-excursions  we will assume that, except in the case where $\Gamma=\text{PSL}_2(\ZZ)$, the involution $U_{\eta}\not\in\Gamma$.  This is the restriction on $\Gamma$ referred to in the introduction. Unfortunately, this forces us to exclude some interesting examples, such as the Hecke groups, \cite{Beardon}. It would be interesting to see what form (\ref{conv2}) might take for these omitted cases. Otherwise, for approximating excursions absolutely no such restrictions are necessary.

The $k$-excursions along $\gamma$ are naturally ordered by defining  $e_1<e_2$ if $t_{e_1}<t_{e_2}.$
 Since the depth parameters diverge to infinity, excursions cannot accumulate \cite{haas}. 
 This also induces an ordering on approximating $k$-excursions.
We shall also  be interested
in the parameter values $s_e$ for which $\tilde{\gamma}_e(s_e)=\tilde{\gamma}_e
\cap \tilde{l}.$  The value $s_e$ is called the {\it excursion
parameter} for the approximating $k$-excursion $e$. It is not hard to see that these values are well
defined and that they are ordered like the corresponding
  excursions.

Let $e_i$ be a  $k$-excursion along a geodesic $\gamma$ with depth and excursion parameters  $t_i$ and $s_i.$ Suppose there is a  $k$-excursion $e_{i-1}$ preceding $e_i$ in the natural ordering of $k$-excursions, with excursion parameter $s_{i-1}.$ Then one can show that $\max\{t_{i-1},s_{i-1}\} < \min\{t_i,s_i\}.$

 We end the section with a lemma and a definition.
 \begin{lemma}\label{ab}
Fix $0<a<b$ and suppose $\alpha_x$ is the geodesic in $\HH$ with endpoints 0 and $x>b$. Let $D(a,b,x)$ denote the distance between $\tilde{l}_a$ and $\tilde{l}_b$ along $\alpha_x$. Then $\inf_x D(a,b,x)=\delta(a,b)>0.$  
\end{lemma}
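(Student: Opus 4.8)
The plan is to turn this into a one-variable calculus problem, since every quantity involved has a simple closed form. Recall that $\tilde l_\tau$ is the vertical line $\mathrm{Re}(z)=\tau$ and that $\alpha_x$ is the Euclidean semicircle with real endpoints $0$ and $x$, hence centered at $x/2$ with radius $x/2$. Since $0<a<b<x$, the semicircle $\alpha_x$ crosses each of $\tilde l_a$ and $\tilde l_b$ in exactly one point, and $D(a,b,x)$ is the hyperbolic length of the sub-arc of $\alpha_x$ between those two crossing points. Parametrize $\alpha_x$ by $z(\theta)=\tfrac{x}{2}(1+e^{i\theta})$, $\theta\in(0,\pi)$; then $|z'(\theta)|=\tfrac{x}{2}$ and $\mathrm{Im}\,z(\theta)=\tfrac{x}{2}\sin\theta$, so the hyperbolic arclength element along $\alpha_x$ is $\dfrac{|z'(\theta)|}{\mathrm{Im}\,z(\theta)}\,d\theta=\dfrac{d\theta}{\sin\theta}$, which notably does not depend on $x$.

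Next I would carry out the integration explicitly. The equation $\mathrm{Re}\,z(\theta)=\tau$ reads $\cos\theta=\tfrac{2\tau}{x}-1$; denote by $\theta_\tau$ the corresponding angle, so that $\theta_a>\theta_b$ because $\cos$ is decreasing on $(0,\pi)$ and $a<b$. Using $\displaystyle\int\frac{d\theta}{\sin\theta}=\log\tan\tfrac{\theta}{2}$ together with the half-angle identity $\tan\tfrac{\theta}{2}=\dfrac{1-\cos\theta}{\sin\theta}$ and the relations $1+\cos\theta_\tau=\tfrac{2\tau}{x}$, $1-\cos\theta_\tau=\tfrac{2(x-\tau)}{x}$ (whence $\tan\tfrac{\theta_\tau}{2}=\sqrt{(x-\tau)/\tau}$), one obtains
\[
D(a,b,x)=\int_{\theta_b}^{\theta_a}\frac{d\theta}{\sin\theta}
=\log\frac{\tan(\theta_a/2)}{\tan(\theta_b/2)}
=\frac12\log\frac{b\,(x-a)}{a\,(x-b)}.
\]
Since $a<b$ this is strictly positive, as it must be.

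Finally I would minimize the right-hand side over $x\in(b,\infty)$. The map $x\mapsto\dfrac{x-a}{x-b}$ has derivative $\dfrac{a-b}{(x-b)^2}<0$, so it is strictly decreasing on $(b,\infty)$ and tends to $1$ as $x\to\infty$; hence $\dfrac{b(x-a)}{a(x-b)}$ decreases strictly to $b/a>1$, and therefore
\[
\inf_{x>b}D(a,b,x)=\frac12\log\frac{b}{a}=:\delta(a,b)>0,
\]
the infimum being approached as $x\to\infty$ but never attained. I do not anticipate any genuine obstacle here: the computation is routine, and the only points meriting a word of justification are that $\alpha_x$ really does meet both vertical lines --- guaranteed by the standing hypothesis $x>b$ --- and keeping the orientation straight so that $D$ is read off as a positive number. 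One could also argue more conceptually, noting that as $x\to\infty$ the relevant sub-arc of $\alpha_x$ is pushed toward the common ideal endpoint $\infty$ of $\tilde l_a$ and $\tilde l_b$ and that its length decreases monotonically, but producing the explicit positive lower bound still amounts to essentially the computation above.
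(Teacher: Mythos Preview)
Your proof is correct and in fact sharper than the paper's. The paper does not compute $D(a,b,x)$ in closed form; instead it observes that $D(a,b,x)\to\infty$ as $x\to b^+$, so only the behavior for large $x$ needs to be controlled, and then bounds $D(a,b,x)$ from below by the ``vertical'' component $\bigl|\log\bigl(\mathrm{Im}\,P_b/\mathrm{Im}\,P_a\bigr)\bigr|=\tfrac12\log\tfrac{b(x-b)}{a(x-a)}$, where $P_\tau=\alpha_x\cap\tilde l_\tau$, noting that this quantity is increasing in $x$ and eventually positive; hence $\inf_x D(a,b,x)>0$. Your approach, by contrast, parametrizes $\alpha_x$ and integrates to obtain the exact formula $D(a,b,x)=\tfrac12\log\tfrac{b(x-a)}{a(x-b)}$, from which you read off that $D$ is strictly decreasing in $x$ and that the infimum equals $\tfrac12\log(b/a)$. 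Both arguments are elementary, but yours yields the explicit value of $\delta(a,b)$ (which the paper never identifies) and makes the monotonicity of $D$ itself transparent, at the cost of a slightly longer computation; the paper's estimate is quicker but only establishes positivity.
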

\begin{proof}
Since $D(a,b,x)\rightarrow\infty$ as $x\rightarrow b$, it will suffice to show that $D(a,b,x)$ is bounded below as $x\rightarrow \infty$. The points at which $\alpha_x$ meets  $\tilde{l}_a$ and $\tilde{l}_b$ respectively are,
$a+i\sqrt{ax-a^2}$ and $b+i\sqrt{bx-b^2}.$ Thus for $x$ sufficiently large, the vertical distance in $\HH$ between the points is $\frac{1}{2}\log |\frac{bx-b^2}{ax-a^2}|.$ This is an increasing function of $x$ and a lower bound for the distance along $\alpha_x.$ Thus  $\delta(a,b)>0.$   
\end{proof}

When   $U_{\eta}\not\in \Gamma$ define the number $\delta(\Gamma)$ to be one-half the minimum of the values $\delta(1-\tau_0,1), \delta(1,1+\tau_0), \delta(1,2-\tau_0), \delta(1,2),$ $ \delta(2,3), \delta(3,4), -\log (\sqrt{2}-1) .$ When   $U_{\eta}\in \Gamma$ define   $\delta(\Gamma)$ similarly, except remove the terms containing $\tau_0$ in the above.

\subsection{The geodesic flow and its invariant measure}\label{its inv}
The unit tangent bundle $T_1\HH$ is identified with $\HH \times
S^1.$ The measure $dAd\theta$ is an invariant measure for the
geodesic flow $\tilde{G}_t$ acting on $T_1\HH.$ If $\text{area}(S) $ denotes
the area of the orbifold $S$, then the normalized measure
$\tilde{\mu}=(2\pi\, \text{area}(S))^{-1}dAd\theta$
projects to a probability
measure $\mu$ on $T_1S$ that is invariant with respect to the
geodesic flow $G_t$ on $T_1S$. It is known that $G_t$ acts
ergodically with respect to $\mu,$ \cite{nicholls}. 

Recall that $\mathcal{E}$ is the set of   ${\mathbf v}\in T_1S$ for which 
$\gamma({\mathbf v})$ contains infinitely many   $k$-excursions for
every $k>0$ and let $\tilde{\mathcal{E}}$ be the set of lifts of vectors in $\mathcal{E}$ to $T_1\HH.$   One important consequence of the Poincare recurrence and   Ergodic Theorems is that
$\mathcal{E} $ is a set of full measure in $T_1S$ \cite{haas}. In fact, we can take $\mathcal{E}$ to have the even stronger property that for $v\in \mathcal{E}$, $\gamma(v)$  contains infinitely many approximating $k$-excursions for every $k>0$, \cite{haas}.

There are other useful coordinates, defined on a full measure subset of $T_1\HH$, consisting of triples
$(x, y, t)\in \RR^3$ with $x\not =y.$ Let $\alpha$ be
the geodesic with $\alpha_+=x,\, \alpha_-=y,$   so that
$\alpha(0)$ is the Euclidean midpoint of the semicircle
$\alpha(\RR).$ Then $(x,y, t)$ corresponds to the vector
$ {\mathbf v}= \dot{\alpha}(t)\in T_1\HH.$ Several things become
particularly nice in  these coordinates. The geodesic flow has the
simple form $\tilde{G}_t(x, y, s)=(x, y, s+t).$  Furthermore
$\tilde{\mu}=\tilde{\nu}\times dt,$ where  
$\tilde{\nu}=(\pi\, \text{area}(S))^{-1}(x-y)^{-2}dx dy$
\cite{nicholls}.  

\subsection{Cross sections of the flow that count excursions}

 For $k>0$, define the set  
 $\Omega_k=\{(x,y)\in J\,|\,  |x-y|>2/k\}$
    Also, let   $\Delta_k=\Omega_k\cap I.$ 
 
We shall define a cross section  $\mathcal{L}^*_k$ for the geodesic flow
  on $T_1S$ first by defining a subset $ \tilde{\mathcal{L}}^*_k$ of the
unit tangent bundle over $\tilde{l}.$ Let $
\tilde{\mathcal{L}}_k=\{(x,y,t)\in T_1\tilde{l}\cap\tilde{\mathcal{E}} |\, (x,y)\in \Omega_k\}$ and
write $\pi_*(\tilde{\mathcal{L}}_k)=\mathcal{L}_k\subset T_1l.$  Let $
\tilde{\mathcal{L}}_k(\epsilon)=\{G_t(x,y,s)|\, (x,y,s)\in
\tilde{\mathcal{L}}_k\, \text{and}\, t\in [0,\epsilon)\}.$
Similarly, define  $\tilde{\mathcal{L}}_k^a=\{(x,y,t)\in \tilde{\mathcal{L}}_k \, |\, (x,y)\in I \} $ and set $\mathcal{L}_k^a=\pi_*(\mathcal{L}_k^a).$ The cross sections $\tilde{\mathcal{L}}_k$ and $\tilde{\mathcal{L}}^a_k$ are the same as the ones defined in \cite{haas}.\\
\\  
\begin{prop}\label{thickL}
With $k<3$ for excursions and $k\leq 2$ for approximating excursions and $\epsilon \leq \delta(\Gamma)$
\begin{itemize}
\item[{\rm (i)}]\label{i} If $g\in \Gamma,\, g\not =\text{Id}$  and $ \tilde{{\mathbf v}}\in \tilde{\mathcal{L}}^*_k(\epsilon)$
then $g_*(\tilde{{\mathbf v}})\not\in \tilde{\mathcal{L}}^*_k(\epsilon )$\\
\item[{\rm (ii)}] $ \mathcal{L}^*_k  $ is a cross-section for the geodesic flow on a set of
full measure in $T_1S.$
\end{itemize}
\end{prop}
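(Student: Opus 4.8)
The two assertions are intertwined: part (i), the injectivity of the thickened section, is the technical heart, and part (ii) then follows from (i) together with the recurrence statements already recalled from \cite{haas}. So the plan is to treat (i) first. Suppose $g\in\Gamma$, $g\neq\mathrm{Id}$, and that both $\tilde{\mathbf v}$ and $g_*\tilde{\mathbf v}$ lie in $\tilde{\mathcal L}^*_k(\epsilon)$; I aim to derive a contradiction. Write $\tilde{\mathbf v}=G_t({\mathbf v}_0)$ and $g_*\tilde{\mathbf v}=G_{t'}({\mathbf w}_0)$ with ${\mathbf v}_0,{\mathbf w}_0\in\tilde{\mathcal L}^*_k$ and $t,t'\in[0,\epsilon)$. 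Since $g_*$ commutes with the flow, $g_*{\mathbf v}_0=G_{t'-t}({\mathbf w}_0)$. Reading this geometrically, let $\beta_1,\beta_2$ be the oriented geodesics carrying ${\mathbf v}_0,{\mathbf w}_0$; each is an excursion geodesic, i.e.\ has ideal endpoint pair in $J$ (resp.\ in $I$) at Euclidean distance $>2/k$, and crosses $\tilde l$ at its excursion parameter. Then $g$ carries $\beta_1$ to $\beta_2$ preserving orientation, hence the endpoint pair of $\beta_1$ to that of $\beta_2$, and carries the point $\beta_1\cap\tilde l$ to $\beta_2\cap g(\tilde l)$, which lies within hyperbolic distance $|t'-t|<\epsilon$ along $\beta_2$ of the point $\beta_2\cap\tilde l$. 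So it suffices to show, for $\epsilon\leq\delta(\Gamma)$, that this configuration is impossible unless $g=\mathrm{Id}$.

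I would then run a case analysis on the line $g(\tilde l)\in\mathscr L$. If $g(\tilde l)=\tilde l$, then $g$ preserves $\{0,\infty\}$; since $\mathrm{Stab}(\infty)=\langle T\rangle$ is purely parabolic and $\mathrm{Stab}(0)\cap\mathrm{Stab}(\infty)$ is trivial, either $g=\mathrm{Id}$ or $g$ interchanges $0$ and $\infty$, forcing $g=U_\eta$; the latter is excluded by hypothesis unless $\Gamma=\mathrm{PSL}_2(\ZZ)$, and in that case $U_\eta=U_1\colon z\mapsto-1/z$ sends every pair of $J$ to a pair whose first coordinate has modulus $>1$, contradicting ${\mathbf w}_0\in\tilde{\mathcal L}^*_k$. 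If $g(\tilde l)=\tilde l_n$ with $n\in\ZZ\setminus\{0\}$, then $g=T^n$, and a direct check shows that $T^n$ for $n\neq0$ carries no pair of $J$ (resp.\ of $I$) into $J$ (resp.\ into $I$), again contradicting ${\mathbf w}_0\in\tilde{\mathcal L}^*_k$. In the remaining cases -- $g(\tilde l)=\tilde l_{\tau_0+m}$ (which forces $g(0)=\infty$), or $g(\tilde l)$ a non-vertical line of $\mathscr L$, necessarily a semicircle of radius $<1/2$ because distinct lifts of the simple geodesic $l$ are disjoint -- the excursion geodesic $\beta_2$ must pass from $\tilde l$ to the disjoint line $g(\tilde l)$. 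Translating so that an ideal endpoint of $\beta_2$ sits at $0$ and applying Lemma \ref{ab}, the length of that passage is bounded below by one of $\delta(1-\tau_0,1)$, $\delta(1,1+\tau_0)$, $\delta(1,2-\tau_0)$, $\delta(1,2)$, $\delta(2,3)$, $\delta(3,4)$, the relevant one being pinned down by enumerating which lines of $\mathscr L$ an excursion geodesic can reach -- an enumeration controlled by the fact that its endpoints lie in prescribed ranges coming from $J$ (resp.\ $I$) together with $|x-y|>2/k$ and $k<3$ (resp.\ $k\leq2$) -- while the term $-\log(\sqrt 2-1)$ handles the last borderline semicircular configuration. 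Since each of these quantities is at least $2\delta(\Gamma)>\epsilon$, this contradicts $|t'-t|<\epsilon$, completing (i).

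For (ii): $\mathcal L^*_k$ lies in the unit tangent bundle over the simple geodesic $l$, which geodesics cross transversally, and by (i) the projection $\pi_*\colon\tilde{\mathcal L}^*_k(\epsilon)\to T_1S$ is injective for $\epsilon\leq\delta(\Gamma)$, so $\mathcal L^*_k(\epsilon)$ is an embedded flow box and the first-return structure along $\mathcal L^*_k$ is well defined. It remains to check that almost every orbit meets $\mathcal L^*_k$, indeed infinitely often in forward time and at a discrete set of times; but this is exactly what is recalled from \cite{haas}: for ${\mathbf v}$ in the full-measure set $\mathcal E$, the geodesic $\gamma({\mathbf v})$ makes infinitely many $k$-excursions (resp.\ approximating $k$-excursions) as $t\to\infty$, each excursion $e$ producing, via the distinguished lift $\tilde\gamma_e$ with endpoint pair in $J$ (resp.\ $I$) at distance $>2/k$, a crossing of $\tilde l$ at the excursion parameter $s_e$, i.e.\ a point of $\tilde{\mathcal L}_k$ (resp.\ $\tilde{\mathcal L}^a_k$) on the lifted orbit; and these crossings are discrete because the depth parameters, hence the $s_e$, diverge. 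Hence $\mathcal L^*_k$ is a cross-section for the geodesic flow on a full-measure subset of $T_1S$.

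The step I expect to be the main obstacle is the last case of (i), where $g(\tilde l)\neq\tilde l$: one must carefully list every line of $\mathscr L$ that an excursion geodesic can meet just after crossing $\tilde l$ and, through Lemma \ref{ab}, extract a clean uniform lower bound on the length of that sub-arc in each case. This calibration is precisely why $\delta(\Gamma)$ is assembled from that particular finite list, with the constant $-\log(\sqrt 2-1)$ included; everything else in the argument is either the bookkeeping of the reduction above or the elementary algebra of the stabilizers and of $T^n$ and $U_1$ acting on $J$ and $I$.
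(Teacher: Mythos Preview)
Your overall architecture matches the paper's: reduce (i) to a lower bound on the length of the sub-arc of the excursion geodesic between $\tilde l$ and $g(\tilde l)$, and deduce (ii) from (i) together with the recurrence results of \cite{haas}. The easy cases $g(\tilde l)=\tilde l$ and $g=T^n$ are essentially as in the paper.

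There are, however, two genuine gaps in your ``remaining cases'' sketch.

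\textbf{First}, the claim that $U_\eta$ is ``excluded by hypothesis unless $\Gamma=\mathrm{PSL}_2(\ZZ)$'' is only assumed for $k$-excursions; for approximating excursions no such restriction is imposed. So when $U_\eta\in\Gamma$ with $\eta<1$ you must check directly that $U_\eta$ carries $I$ outside $I$: if $x\in(0,1)$ and $y<-1$ then $(U_\eta(x),U_\eta(y))\in(-\infty,-\eta^2)\times(0,\eta^2)$, which misses $I$. (A related issue: $g(\tilde l)=\tilde l_n$ does not force $g=T^n$ when $U_\eta\in\Gamma$, since $g=T^nU_\eta$ also works; this is again disposed of by the $I$-check just mentioned.)

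\textbf{Second, and more seriously}, your plan ``translate so that an ideal endpoint of $\beta_2$ sits at $0$ and apply Lemma~\ref{ab}'' cannot work when $g(\tilde l)$ is a non-vertical semicircle: Lemma~\ref{ab} bounds the distance along a geodesic between two \emph{vertical} lines $\tilde l_a,\tilde l_b$, and no translation turns a semicircle into a vertical line. The paper's decomposition is organized not by ``vertical vs.\ semicircle'' but by whether $g(\tilde l)$ and $\tilde l$ \emph{share an endpoint}, and the two situations require different ideas:
\begin{itemize}
\item If $g(\tilde l)$ and $\tilde l$ share no endpoint, Lemma~\ref{ab} plays no role at all. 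Instead one uses that for $g\notin\{\mathrm{id},U_\eta\}$ the set $g(H_\eta\cup D_\eta)$ is disjoint from $H_\eta\cup D_\eta$, so $g(\tilde l)$ misses both horoballs; the minimal distance from such a geodesic to $\tilde l$ is realized when it is tangent to both horoballs, and a direct computation gives $-\log(\sqrt 2-1)$. This is the source of that constant, not any ``borderline'' application of Lemma~\ref{ab}.
\item If $g(\tilde l)$ and $\tilde l$ share the endpoint $0$, one \emph{conjugates} (not translates) to make both lines vertical and only then invokes Lemma~\ref{ab}. Two subcases: if $g(\infty)=0$ (so $g(0)\neq0$), then $g^{-1}(\tilde l)=\tilde l_\tau$ is vertical and the distance along $\gamma$ from $\tilde l$ to $g(\tilde l)$ equals the distance along $g^{-1}(\gamma)$ from $\tilde l_\tau$ to $\tilde l$, yielding the bounds $\delta(1,1+\tau_0),\,\delta(1,2),\,\delta(1,2-\tau_0)$. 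If $g(0)=0$, then by Lemma~\ref{stab0} $g=R^{-n}$ and conjugating by $U_\eta$ replaces $\tilde l,\,R^{-1}(\tilde l)$ by $\tilde l,\,\tilde l_{-1}$; the estimate $|R^{\pm n}(\infty)|\le 1/|n|$ from Lemma~\ref{stab0} is what controls the endpoints of $U_\eta(\gamma)$ and produces $\delta(2,3)$ and $\delta(3,4)$. The case of a shared endpoint at $\infty$ gives $\delta(1-\tau_0,1)$ directly.
\end{itemize}
So the finite list defining $\delta(\Gamma)$ is not a by-product of a single enumeration-plus-Lemma~\ref{ab}; it records the outputs of two distinct mechanisms (horoball disjointness and conjugation-to-verticals), and your sketch is missing both of them.
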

Part (i) says that the projection from $\tilde{\mathcal{L}}_k(\delta(\Gamma))  $ to $\mathcal{L}_k(\delta(\Gamma))  $ is 1-to-1. Part (ii) is proved in \cite{haas} and says that as you flow along a generic orbit of the geodesic flow, you repeatedly return to $\mathcal{L}_k  $. Each time you do corresponds to an excursion of the geodesic, which is the projection of that orbit.

Let $T^{\tau}(z)=z+\tau$ and define $R^{\tau}=U_{\eta}\circ T^{\tau}\circ U_{\eta}.$  
We will be needing the following lemma.
\begin{lemma}\label{stab0}
$R=R^1$ generates the stabilizer of zero,  Stab$(0)$, in $\Gamma$ and $|R^n(\infty)|\leq\frac{1}{|n|}$ for $n\in \ZZ\setminus\{0\}$ with equality if and only if $\Gamma=\text{PSL}_2(\ZZ).$

\end{lemma}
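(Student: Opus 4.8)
## Proof Proposal for Lemma \ref{stab0}

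The plan is to work out $R^n$ explicitly as a matrix and read off everything directly. Since $U_\eta(z) = -\eta^2/z$ corresponds (up to normalization) to the matrix $\pmtx{0 & -\eta \\ 1/\eta & 0}$, and $T^n(z) = z+n$ corresponds to $\pmtx{1 & n \\ 0 & 1}$, conjugating gives $R^n = U_\eta T^n U_\eta$ corresponding to $\pmtx{1 & 0 \\ -n/\eta^2 & 1}$, i.e. $R^n(z) = \dfrac{z}{1 - (n/\eta^2)z} = \dfrac{\eta^2 z}{\eta^2 - nz}$. First I would verify this computation and note that $R^n$ fixes $0$ for every $n$, and that $R = R^1$ generates a cyclic group $\langle R\rangle = U_\eta \langle T\rangle U_\eta$ of parabolics fixing $0$.

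Next I would argue that $\langle R\rangle$ is all of $\mathrm{Stab}(0)$ in $\Gamma$. The element $h \in \Gamma$ with $h(\infty) = 0$ from the normalization in Section \ref{normalize} conjugates $\mathrm{Stab}(\infty)$ to $\mathrm{Stab}(0)$; since $\mathrm{Stab}(\infty)$ is generated by $T$ (the group is zonal), $\mathrm{Stab}(0)$ is generated by $hTh^{-1}$, which is parabolic fixing $0$ with some translation length. It remains to identify this generator with $R$ (or an appropriate power), and here I would invoke the normalization: the horocycles $H_\eta$ and $h(H_\eta)$ are tangent at $\eta i$, which pins down the "size" of $h(H_\eta)$ — it is the horocycle at $0$ of Euclidean diameter $\eta^2$ — and forces the generator of $\mathrm{Stab}(0)$ to have exactly the translation length realized by $R$ at that horocycle. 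Comparing the isometric-circle / horocycle data of $R$ (Euclidean diameter of $R(H_\eta)$, namely the image horoball at $\infty$) with that of $h T^{\pm 1} h^{-1}$ shows they agree, so $\mathrm{Stab}(0) = \langle R\rangle$.

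Finally, the estimate: from $R^n(\infty) = \lim_{z\to\infty} \dfrac{\eta^2 z}{\eta^2 - nz} = -\dfrac{\eta^2}{n}$, so $|R^n(\infty)| = \dfrac{\eta^2}{|n|}$. Since $\eta = \eta(\Gamma) \le 1$ always, and $\eta < 1$ for every group other than $\mathrm{PSL}_2(\ZZ)$ (both facts recalled in Section \ref{normalize}), we get $|R^n(\infty)| \le \dfrac{1}{|n|}$ with equality precisely when $\eta = 1$, i.e. precisely when $\Gamma = \mathrm{PSL}_2(\ZZ)$.

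I expect the main obstacle to be the middle step — rigorously pinning down that the generator of $\mathrm{Stab}(0)$ is exactly $R$ rather than $R^m$ for some $m > 1$. The matrix computation and the final inequality are routine; the content is that the specific normalization of $\Gamma$ (tangency of $H_\eta$ and $h(H_\eta)$ at $\eta i$) forces the primitive parabolic at $0$ to be $R^1$. I would handle this by translating the tangency condition into the statement that $R$ maps the horoball bounded by $h(H_\eta)$ to the horoball bounded by $H_\eta$ shifted by one unit — equivalently that $R$ is the primitive element identifying the two "sides" of the fundamental domain picture near $0$ — which is exactly the defining property of the chosen normalization.
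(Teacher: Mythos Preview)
Your approach is correct and the final inequality is handled exactly as in the paper: both compute $R^n(\infty)=-\eta^2/n$ and invoke $\eta\le 1$ with equality only for $\text{PSL}_2(\ZZ)$.

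For the first assertion, however, you take a genuinely different route. The paper does not compute $hTh^{-1}$ and then match parameters. Instead it observes that if $g\in\Gamma$ has $g(0)=\infty$ and $\tau$ is chosen so that $T^\tau(g(\infty))=0$, then $T^\tau\circ g$ swaps $0$ and $\infty$, preserves $\tilde l$, and (by the tangency normalization) fixes $i\eta$; hence $T^\tau\circ g=U_\eta$ as a M\"obius map. This single identity yields $U_\eta\,\text{Stab}(0)\,U_\eta=\text{Stab}(\infty)$ even when $U_\eta\notin\Gamma$, so $h$ generates $\text{Stab}(0)$ iff $U_\eta h U_\eta=T^{\pm1}$, i.e.\ $h=R^{\pm1}$. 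Your plan---write $h=\pmtx{0&-1/c\\ c&d}$, compute $hTh^{-1}(z)=z/(1-c^2z)$, and use the tangency of $h(H_\eta)$ and $H_\eta$ at $i\eta$ to force $c^2=1/\eta^2$---reaches the same conclusion, but via explicit matrix/horocycle bookkeeping rather than the one-line identification $T^\tau g=U_\eta$. Your framing of the obstacle as ``$R$ versus $R^m$'' is slightly off: the real issue is not whether $R$ is primitive in a group it already lies in, but whether $R\in\Gamma$ at all (since $U_\eta$ need not be), and this is exactly what the equality $hTh^{-1}=R$ settles. The paper's trick gives this for free; your route earns it by computation.
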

\begin{proof}
Choose $g\in \Gamma$ so that $g(0)=\infty$ and $\tau\in \RR$ so that $T^{\tau}(g(\infty))=0.$ By our normalization  Im$(g(i\eta))=i\eta.$ Then since $T^{\tau}\circ g$ maps $\tilde{l}$ onto itself, it must fix $i \eta$ and it is therefore equal to $U_{\eta}.$ It follows that $h$ generates Stab(0) if and only if $g\circ h\circ g^{-1}$ generates Stab$(\infty)$ if and only if 
$T^{\tau}\circ (g\circ h\circ g^{-1})\circ T^{-\tau}=U_{\eta}\circ h\circ U_{\eta}$ generates Stab$(\infty)$. Thus $U_{\eta}\circ h\circ U_{\eta}=T^{\pm 1}$ which implies $h=R^{\pm 1}.$ 

 As to the last assertion of the lemma,  observe that $R^{\tau}(\infty)=U_{\eta}\circ T^{\tau}(0)$ and therefore $U_{\eta}(R^{\tau}(\infty))=T^{\tau}(0)=\tau.$ Consequently,
$|R^n(\infty)|=|U_{\eta}(n)|=|\frac{\eta^2}{n}|\leq \frac{1}{|n|}$. Since $\eta\leq 1$, there is equality if and only if $\eta=1$, which is precisely when $\Gamma=\text{PSL}_2(\ZZ).$
\end{proof}

\noindent {\em Proof of Proposition \ref{thickL}.}
First let's see that $g(\tilde{\mathcal{L}}^*_k)\cap\tilde{\mathcal{L}}^*_k
=\emptyset.$ If $U_{\eta}\not\in\Gamma$ then this is clear, so suppose $U_{\eta} \in\Gamma$. For approximating $k$-excursions, if ${\mathbf v}=(x,y,s)\in\tilde{\mathcal{L}}^a_k$ then $(x,y)\in I.$ $(U_{\eta})_*({\mathbf v})$ has first coordinates $(U_{\eta}(x),U_{\eta}(y))$. If, for example, $x\in (0,1)$ then   $(U_{\eta}(x),U_{\eta}(y))\in (-\infty, -\eta^2)\times(0,\eta^2)$ which is disjoint from $I$. Therefore,  $(U_{\eta})_*({\mathbf v})\not\in\tilde{\mathcal{L}}^a_k.$ For $k$-excursions we have assumed that $U_{\eta}\in\Gamma$ only if $\eta=1$ and $\Gamma=PSL_2(\ZZ).$ If ${\mathbf v}=(x,y,s)\in\tilde{\mathcal{L}}_k$ then $(x,y)\in J.$ Again considering $x\in (0,1)$,  $(U_{1})_*({\mathbf v})$ has first coordinates 
 $(U_{1}(x),U_{1}(y))\in (-\infty, -1)\times(0,1)$ which is disjoint from $J$. Therefore,  $(U_{1})_*({\mathbf v})\not\in\tilde{\mathcal{L}}_k.$

Set $\Gamma_0=\Gamma\setminus \{\text{id}, U_{\eta}\}$. In order to prove the proposition, we will argue that for ${\mathbf v}\in \tilde{\mathcal{L}}^*_k$
and $t\in (0,\delta(\Gamma)),\, \tilde{G}_t({\mathbf v})\not \in g_*(\tilde{\mathcal{L}}^*_k)$ for any $g\in \Gamma_0$. To this end, suppose ${\mathbf v}\in\tilde{\mathcal{L}}^*_k$. Let $\gamma=\gamma({\mathbf v})$ and suppose that the tangent to $\tilde{\gamma}(t)$ lies in $g( \tilde{\mathcal{L}}^*_k ).$ Then we show that the distance along $\tilde{\gamma}$ between $\tilde{l}$ and $g(\tilde{l})$ is bounded below by $\delta(\Gamma)$.

There are three cases to consider. In the first  $g(\tilde{l})$ and $\tilde{l}$
 do not share an endpoint. Set $D_{\eta}=U_{\eta}(H_{\eta}).$ Then for $g\in\Gamma_0,$    $g(H_{\eta}\cup D_{\eta} )\cap  (H_{\eta}\cup D_{\eta}  )=\emptyset $.  
  The minimal distance between an arbitrary  geodesic $\tilde{\alpha}$, disjoint from $H_{\eta}\cup D_{\eta}$, and $\tilde{l}$ is realized when
$\tilde{\alpha}$ is tangent to both $H_{\eta}$ and $D_{\eta}$. An easy computation (renormalize by taking $(0, \infty, i\eta)$ to $(-1,1,\frac{1}{2}i)$) shows that this distance is $-\log(\sqrt{2}-1).$ The first case follows.

Now we consider the cases where $g(\tilde{l})$ and $\tilde{l}$
 do   share an endpoint. Suppose without loss of generality, that the geodesic $\tilde{\gamma}=\tilde{\gamma}(\tilde{{\mathbf v}})$ for $\tilde{{\mathbf v}} \in \tilde{\mathcal{L}}^*_k$ has endpoints $(\gamma_+,\gamma_-)\in (0,1)\times(-\infty,0).$  If $g(\tilde{l})$ and $\tilde{l}$ have the point at infinity in common, then $g(\tilde{l})=\tilde{l}_{\tau_0}$. The distance between $\tilde{l}$ and $\tilde{l}_{\tau_0}$ is bounded below by $\delta(1-\tau_0, 1)<\delta(\Gamma),$ which leaves us with the case in which  $g(\tilde{l})$ and $\tilde{l}$
 share an endpoint at 0.

 Since the tangent to $\tilde{\gamma}$ at $\tilde{\gamma}\cap g(\tilde{l})$ lies in  $g_*(\tilde{\mathcal{L}}^*_k)$, the tangent to $g^{-1}(\tilde{\gamma})$ at  $g^{-1}(\tilde{\gamma})\cap \tilde{l}$ lies in $\tilde{\mathcal{L}}^*_k$. For this to be true we must at least have $(g^{-1}(\gamma_+),g^{-1}(\gamma_-))\in  J$.

 There are two subcases. 
 \begin{prolist}
 \item[{\rm 1.}] In the first we suppose that $g(0)\not= 0$. Then  $g(\infty)=0$ and there must exist $\tau>0$ so that $g^{-1}(\tilde{l})=\tilde{l}_{\tau}$ where $\tau=\epsilon\tau_0+m-\epsilon$ with $m>0$ and $\epsilon = 0$ or 1 depending on whether $U_{\eta}\in \Gamma$ or not. It follows that $g^{-1}(\gamma_+)\in (-1,0).$ The distance between $\tilde{l}$ and $g(\tilde{l})$  along $\gamma$ is equal to  the distance between $\tilde{l}$ and $\tilde{l}_{\tau}$ along $g^{-1}(\gamma)$. This value is  bounded below by the distance along the geodesic $\alpha$ with endpoints $-1$ and $\tau+1$ between $\tilde{l}$ and $\tilde{l}_{\tau}.$ By Lemma \ref{ab}, this value is bounded below by $\delta(1, 1+\tau ).$ Then we have a lower bound of either  $\delta(1,2)$ or $\delta(1, 1+\tau_0 )$, depending on whether $U_{\eta}\in \Gamma$ or not. Note that, if we had assumed instead at the start, that $\gamma_+\in (-1,0)$ then $g^{-1}(\gamma_+)\in(0,1)$ and the lower bound is $\delta(1, 2-\tau_0)$ or $\delta(1,2)$. That finishes the first subcase.
 
 \item[{\rm 2.}] Now we suppose that $g(0)=0.$ Then by Lemma \ref{stab0}, $g=R^{-n}$ for some integer $n>0.$ The distance along $\gamma$ between $\tilde{l}$ and $g(\tilde{l})$ is bounded below by the distance along $\gamma$ between $\tilde{l}$ and $R^{-1}(\tilde{l})$ which is the distance $\Delta$ between $\tilde{l}$ and $\tilde{l}_{-1}$ along $U_{\eta}(\gamma)$.  
 
Again by Lemma \ref{stab0} there are two possibilities for the location of the endpoints of $\gamma$. In the first, $\gamma_+>\frac{1}{3}>R^{-3}(\infty)$ and $\gamma_-\in (-\infty,0).$ Then $U_{\eta}(\gamma_+)\in (-3,0)$ and $U_{\eta}(\gamma_-)>0$. Let $\alpha$ be the geodesic whose endpoints are -3 and Int$[U_{\eta}(\gamma_-)]+1$ where Int$[x]$ is the integral part of $x$.   Then $\Delta$ is bounded below by the distance between   $\tilde{l}$ and $\tilde{l}_{-1}$ along $\alpha$. By Lemma \ref{ab} this value is bounded below by $\delta(2,3).$

The other possibility is  $\gamma_-<-\frac{1}{3}<R^{3}(\infty)$ and $\gamma_+\in (0,1).$ Then $U_{\eta}(\gamma_+)<-1$ and $U_{\eta}(\gamma_-)\in (0,3)$. Let $\alpha$ be the geodesic whose endpoints are Int$[U_{\eta}(\gamma_+)]$ and 3. Then $\Delta$ is bounded below by the distance between   $\tilde{l}$ and $\tilde{l}_{-1}$ along $\alpha$. Again by Lemma \ref{ab} this value is bounded below by $\delta(3,4).$\qed \\
 
\end{prolist}

  \noindent {\bf Remarks.}
 It follows from the proposition that Theorems 4 and 6 of
\cite{haas} hold for all finite area non-compact orbifolds and the Fushsian groups representing them, in particular for Hecke groups.

 \subsection{The measure of a thickened section}
 Let $\mathcal{A}^*(z)$ denote either  $\mathcal{A}(z)$ or $z$, depending on whether the star is $a$ or nothing. 
\begin{prop}\label{LL}
For $\epsilon <\delta(\Gamma)$, $k<3$ for excursions and $k\leq 2$ for approximating excursions
$$ \mu(\mathcal{L}_k^*(\epsilon))=\frac{\epsilon \, \mathcal{A^*}(k)}{\pi\, \text{area}(S) }.$$
\end{prop}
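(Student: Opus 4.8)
\emph{Proof proposal.} Write $\Omega_k^{*}$ for $\Omega_k$ when $*$ is nothing and for $\Delta_k=\Omega_k\cap I$ when $*=a$. The plan is to lift the computation to $T_1\HH$, use the product structure of the invariant measure in the coordinates $(x,y,t)$ of Section~\ref{its inv} to reduce to a plane integral over $\Omega_k^{*}$, and then evaluate that integral; the case division in the integral is exactly what produces the two--part formula for $\mathcal{A}$. The first step is the identity $\mu(\mathcal{L}^{*}_k(\epsilon))=\tilde{\mu}(\tilde{\mathcal{L}}^{*}_k(\epsilon))$, and this is where Proposition~\ref{thickL}(i) is used: for $\epsilon\le\delta(\Gamma)$ the projection $\pi_*$ is injective on $\tilde{\mathcal{L}}^{*}_k(\epsilon)$, and since $\mu$ is the push--forward of the $\Gamma$--invariant measure $\tilde{\mu}$ under $\pi_*$, injectivity on a set forces the measure of the set and of its image to coincide. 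So it suffices to compute $\tilde{\mu}(\tilde{\mathcal{L}}^{*}_k(\epsilon))$. Passing to the coordinates $(x,y,t)$ on the full--measure subset of $T_1\HH$: a geodesic with $(x,y)\in J$ crosses the vertical line $\tilde l$ in exactly one point, at some flow time $s=s(x,y)$, so up to the $\tilde{\mu}$--null set removed in forming $\tilde{\mathcal{E}}$ we have $\tilde{\mathcal{L}}_k=\{(x,y,s(x,y)):(x,y)\in\Omega_k\}$ and $\tilde{\mathcal{L}}^{a}_k=\{(x,y,s(x,y)):(x,y)\in\Delta_k\}$. Since $\tilde G_t(x,y,s)=(x,y,s+t)$ is injective on $\tilde{\mathcal{L}}^{*}_k\times[0,\epsilon)$ (distinct $(x,y)$ give distinct geodesics, and a geodesic has a single base point on $\tilde l$), thickening replaces every $t$--fiber of $\tilde{\mathcal{L}}^{*}_k$ by an interval of length $\epsilon$; as $\tilde{\mu}=\tilde{\nu}\times dt$ with $\tilde{\nu}=(\pi\,\text{area}(S))^{-1}(x-y)^{-2}\,dx\,dy$, Fubini yields
\[
\mu(\mathcal{L}^{*}_k(\epsilon))=\tilde{\mu}(\tilde{\mathcal{L}}^{*}_k(\epsilon))=\epsilon\,\tilde{\nu}(\Omega_k^{*})=\frac{\epsilon}{\pi\,\text{area}(S)}\iint_{\Omega_k^{*}}\frac{dx\,dy}{(x-y)^{2}} .
\]

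It then remains to check that $\iint_{\Omega_k^{*}}(x-y)^{-2}\,dx\,dy=\mathcal{A}^{*}(k)$. The involution $(x,y)\mapsto(-x,-y)$ preserves $(x-y)^{-2}\,dx\,dy$ and the constraint $|x-y|>2/k$ and interchanges the two rectangles forming $J$ (and those forming $I$), so the two halves contribute equally and one integrates over the part of $\Omega_k^{*}$ in $(0,1)\times(-\infty,0)$ and doubles. There $x-y>0$, and with $u=x-y$ one sees that for fixed $x\in(0,1)$ the variable $u$ ranges over $(\max(x,2/k),\infty)$ for $\Omega_k$ and over $(\max(x+1,2/k),\infty)$ for $\Delta_k$ (the extra lower bound $x+1$ encoding the condition $|y|>1$ coming from $I$), so the inner integral is $1/\max(x,2/k)$, respectively $1/\max(x+1,2/k)$. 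Splitting the outer integral at the point where the maximum switches and doing two elementary integrations gives: for $\Omega_k$ with $k\le 2$ the inner integral is the constant $k/2$, so the $x$--integral over $(0,1)$ is $k/2$ and hence $k$ after doubling; for $\Delta_k$ the break point $x=\tfrac2k-1$ lies in $(0,1)$ precisely when $1<k\le 2$, where one gets $\tfrac k2\bigl(\tfrac2k-1\bigr)+\log 2-\log\tfrac2k=1-\tfrac k2+\log k$ and hence $2-k+2\log k$ after doubling, while for $k\le 1$ the bound $u>2/k$ dominates $u>x+1$ and one again gets $k$. In each case the value is $\mathcal{A}^{*}(k)$, which gives the proposition.

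I expect the only part requiring real care to be the bookkeeping in this last double integral, and in particular the observation that on the approximating cross--section the condition $(x,y)\in I$ (equivalently $|y|>1$ on the relevant rectangle) only begins to bind once $k>1$ — which is exactly what forces $\mathcal{A}$ to change from $z$ to $2-z+2\log z$ on $[1,2]$. The remaining points, namely the transfer $\mu(\mathcal{L}^{*}_k(\epsilon))=\tilde{\mu}(\tilde{\mathcal{L}}^{*}_k(\epsilon))$ and the survival of the factorization $\tilde{\mu}=\tilde{\nu}\times dt$ under the thickening, are routine given Proposition~\ref{thickL}(i) and the description of $\tilde{\mu}$ in Section~\ref{its inv}.
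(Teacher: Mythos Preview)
Your proposal is correct and takes essentially the same approach as the paper: reduce $\mu(\mathcal{L}_k^{*}(\epsilon))$ to $\tilde\mu(\tilde{\mathcal{L}}_k^{*}(\epsilon))$ via the injectivity of $\pi_*$ supplied by Proposition~\ref{thickL}(i), use the product form $\tilde\mu=\tilde\nu\times dt$ to peel off the factor $\epsilon$, and then evaluate $\tilde\nu(\Omega_k^{*})=(\pi\,\text{area}(S))^{-1}\iint_{\Omega_k^{*}}(x-y)^{-2}\,dx\,dy$ by the same symmetry-and-split computation. Your write-up is in fact more explicit than the paper's, which carries out only the approximating case and dismisses the excursion case as ``similar and easier''; the one point you leave unaddressed is the excursion integral for $2<k<3$, but the paper does not treat that range explicitly either.
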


\begin{proof}
We prove  Proposition \ref{LL} for $\mathcal{L}_k^a(\epsilon)$. The computation for regular excursions is similar and easier.
As a result of Proposition \ref{thickL} the projection of $\tilde{\mathcal{L}}^a_k(\epsilon)$ to $S$ is injective. Therefore
$\mu(\mathcal{L}^a_k(\epsilon))=\tilde{\mu}(\tilde{\mathcal{L}}^a_k(\epsilon))$. 
This last quantity is $\epsilon\tilde{\nu}(\tilde{\mathcal{L}}_k).$ Set $E_k^a=\{(x,y)\,|\, (x,y,t)\in\mathcal{L}_k^a \} $.
$E^a_k$ is a full measure subset of $\Delta_k$ \cite{haas}.   Therefore
\beq\label{nu}
\tilde{\nu}(\tilde{\mathcal{L}}^a_k)=
\int_{E_k^a} d\tilde{\nu}(x)=
\frac{1}{\pi\, \text{area}(S) }\int_{\Delta_k}\frac{1}{(x-y)^2}dydx. 
\eeq
For $1\leq k\leq2$, line (\ref{nu}) becomes
\beq\label{nu2}
\frac{2}{\pi\, \text{area}(S) }\left (\int_0^{\frac{2}{k}-1}\int_{-\infty}^{x-\frac{2}{k}} \frac{1}{(x-y)^2}dydx+
\int_{\frac{2}{k}-1}^1\int_{-\infty}^{-1}\frac{1}{(x-y)^2}dydx\right )
\eeq
which gives  $\d{\mu(\mathcal{L}_k^a(\epsilon))=\frac{\epsilon}{\pi\, \text{area}(S) }(2-k+2\log k).}$

When $k<1$, the second integral in line (\ref{nu2}) disappears and the limits of the first integral  in the $x$ variable, go from 0 to 1. The result is  $\d{\mu(\mathcal{L}^a_k(\epsilon))=\frac{\epsilon}{\pi\, \text{area}(S) }k.}$
\end{proof}

\subsection{Proof of Theorem \ref{counting}}
 Fix $k\leq 2$ for approximating excursions and $k<2$ for excursions.  Suppose ${\mathbf v}\in \mathcal{E}$ and $\gamma=\gamma({\mathbf v}).$ Since the depth and  excursion parameters alternate, between 0 and $t>0$ there can lie at most two  depth parameters more than excursion parameters but at most two fewer depth parameters.   It follows that for ${\mathbf v}\in \mathcal{E} $ and $ \epsilon\leq\delta(\Gamma)$,
\begin{equation}\label{floweqn}
 \int_0^t\boldsymbol{\chi}_{\mathcal{L}^*_k(\epsilon)}\left (G_{\tau}({\mathbf v})\right )
d\tau-2\epsilon \leq \epsilon N^*_{\mathbf v}(k)(t) \leq
\int_0^t\boldsymbol{\chi}_{\mathcal{L}^*_k(\epsilon)}\left (G_{\tau}({\mathbf v})\right )
d\tau+2\epsilon.
\end{equation}

By the Ergodic Theorem for flows, for almost all ${\mathbf v}\in \mathcal{E}$

\begin{equation}\label{floweqn2}
\lim_{t\rightarrow\infty}\frac{1}{t}\int_0^t\boldsymbol{\chi}_{\mathcal{L}^*_k(\epsilon)}\left
(G_{\tau}({\mathbf v})\right ) d\tau= \int_{T_1S}\boldsymbol{\chi}_{\mathcal{L}^*_k(\epsilon)}({\mathbf v})
d\mu=\mu(\mathcal{L}^*_k(\epsilon))
\end{equation}
Divide through by $t$ and $\epsilon$ in (\ref{floweqn}) and let  $t$ go to infinity.
Using (\ref{floweqn2}) and Proposition \ref{LL}   to compute the limit, we have shown that for fixed $k$, Theorem \ref{counting} hold for almost all ${\mathbf v}\in\mathcal{E}.$

In order to complete the proof of Theorem \ref{counting}, we need to show that  the limits  hold  simultaneously for all $ k\leq 2$ for approximating excursions and $k<2$ for excursions, on a set of full measure in $T_1S$. Using basic properties of measurable sets, we can assume that for  all ${\mathbf v}$ in a full measure subset $\mathcal{R}\subset \mathcal{E} $, Theorem \ref{counting} holds for a countable dense subset $\mathcal{D}$ of values $k$ in $(0,2]$ or $(0,2).$ Without loss of generality suppose $2\in \mathcal{D}$ when dealing with approximating excursions. Then for  any $k<2$, there exist sequences of arbitrary small numbers $\delta_n$ and $\zeta_n$ so that for any $n\in \NN$, $k-\delta_n\in \mathcal{D}$ and $k+\zeta_n\in \mathcal{D}.$
 Then for ${\mathbf v}\in\mathcal{R}$
 \beqq
\frac{\mathcal{A}^*(k-\delta_n)}{\pi\, \text{area}(S) }= \lim_{t\rightarrow\infty}\frac{ N^*_{\mathbf v}(k-\delta_n)(t)}{t}\leq \liminf_{t\rightarrow\infty}\frac{ N^*_{\mathbf v}(k )(t)}{t}\leq
\eeqq
\beqq
 \limsup_{t\rightarrow\infty}\frac{ N^*_{\mathbf v}(k )(t)}{t}\leq\lim_{t\rightarrow\infty}\frac{ N^*_{\mathbf v}(k+\zeta_n)(t)}{t}=
\frac{ \mathcal{A}^*(k+\zeta_n)}{\pi\, \text{area}(S) }
  \eeqq
Letting  $n\rightarrow\infty$ we see that   Theorem \ref{counting} holds for all appropriate $k$.
\qed\\
 
 \noindent{\bf Remark.} In proving Theorem \ref{counting} for $k$-excursions, it is possible to get a  stronger result which includes values of $2\leq k<3$. When $k<2$ there is a one-to-one correspondence between excursions, excursion parameters and depth paremeters. But when $k\geq 2$ there exist excursions which do not have corresponding excursion parameters. The integrals in (\ref{floweqn}) count excursion parameters. In light of this we extend the original definition of  $N_{\mathbf v}(k)(t)$, for values of $k\geq 2$, to be  the number of depth parameters $t_i\leq t$ for which there is a corresponding excursion parameter. The above argument gives a proof of of Theorem \ref{counting}  with this extended definition for $k\geq 2$. This is exactly what is needed to prove Proposition \ref{prop} below.   
 
\subsection{Proof of Corollary \ref{return}}
Let ${\mathbf v}$ be a unit tangent vector for which the conclusions of Theorem \ref{counting} hold and let $t_n$ be the depth parameters for the $*$-excursions  along $\gamma=\gamma({\mathbf v})$. Then 
$N^*_{\mathbf v}(k)(t_n)=n$ and the average length of an excursion is
\beqq
\lim_{n\rightarrow\infty}\frac{1}{n}\sum_{i=1}^{n-1}t_{i+1}-t_i  \,\,=\,\,\lim_{n\rightarrow\infty}\frac{t_n}{n}\,\,=\,\,\lim_{n\rightarrow\infty}\frac{t_n}{N^*_{\mathbf v}(k)(t_n)}\,\,=\,\,\frac{\pi\, area(S)}{\mathcal{A}^*(k)}
\eeqq
as asserted in Part (i).

Let $\delta_n$ be the boundary parameter corresponding to  the $n^{th}$ intersection of $\gamma$ with the boundary, $\partial C_k$. Since $k\leq 1$ the interiors of horocycles in $\HH$ covering $C_k$ are all disjoint. It follows that $\delta_{2i-1}< t_i<\delta_{2i}$ (this may be off by one if $\gamma(0)\not\in C_k$.) Then the average length of an excursion is 
\beq\label{cusptime}
\lim_{n\rightarrow\infty}\frac{1}{n}\sum_{i=1}^{n}\delta_{2i}-\delta_{2i-1}=  
  \lim_{n\rightarrow\infty}\frac{\int_0^{t_n}\chi_{(T_1C_k)}(G_t({\mathbf v}))\,dt}{N_{\mathbf v}(k)(t_n)}=
  \eeq
\beqq
\lim_{n\rightarrow\infty}\left (\frac{1}{t_n}\int_0^{t_n}\chi_{(T_1C_k)}(G_t({\mathbf v}))\,dt\times \frac{t_n}{N_{\mathbf v}(k)(t_n)}\right )=
\eeqq
\beq\label{flow}
   \lim_{n\rightarrow\infty}\frac{1}{t_n}\int_0^{t_n}\chi_{(T_1C_k)}(G_t({\mathbf v}))dt \times  \lim_{n\rightarrow\infty}\frac{t_n}{N_{\mathbf v}(k)(t_n)}
    \,\,=\,\,\pi ,
\eeq
where we have  further stipulated that ${\mathbf v}$ comes from the full measure set on which the first limit in line (\ref{flow}) above converges. By the Ergodic Theorem for flows, for almost all ${\mathbf v}$, the first limit  in (\ref{flow}) will converge to $k/\text{area}(S)$. The value of the second limit comes from Theorem \ref{counting}. \qed

\subsection{Some special vectors and geodesics}\label{special}
This is a good time to introduce a special set of unit tangent vectors that will be useful when we turn our attention to rational approximation.  Given $x\in (0,1),$ let $\tilde{{\mathbf v}}_x$ be the unit tangent vector in $T_1\HH$ based at the point $x+2i$ pointing in the direction of $x$, let ${\mathbf v}_x=\pi_*(\tilde{{\mathbf v}}_x)$, let $\tilde{\beta}_x=\tilde{\gamma}(\tilde{{\mathbf v}}_x)$ and let $\beta_x=\gamma({\mathbf v}_x)$. The geodesic $\tilde{\beta}_x$ is a vertical Euclidean ray in $\HH$ with the endpoint $x$. 

\begin{prop}\label{prop}
For almost all $x\in (0,1)$ and $k\in (0,3)$ the limits (\ref{overlap})  in Theorem \ref{counting} hold for ${\mathbf v}={\mathbf v}_x.$
\end{prop}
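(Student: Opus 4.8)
The statement of Theorem \ref{counting} holds for \emph{almost all} ${\mathbf v}\in T_1S$, but the vectors ${\mathbf v}_x$ form a measure-zero family in $T_1S$ (a one-parameter family sitting over the horocycle $\text{Im}(z)=2$), so one cannot simply quote the theorem. The plan is to show that the conclusion nonetheless holds for ${\mathbf v}_x$ for a.e.\ $x\in(0,1)$ by transferring the full-measure statement through a measure class of the form ``boundary point $\times$ transverse data,'' using the product structure $\tilde\mu=\tilde\nu\times dt$ recorded in Section \ref{its inv} together with a standard Fubini argument in the $(x,y,t)$ coordinates.

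\textbf{Step 1: Reduce to a statement about geodesic rays via their forward endpoints.}
The quantity $N_{\mathbf v}(k)(t)$ counts $k$-excursions along the forward ray $\gamma({\mathbf v})\big|_{[0,\infty)}$, and by the discussion in Section \ref{normalize} a $k$-excursion is detected by a lift $\tilde\gamma_e$ with $(\tilde\gamma_{e,+},\tilde\gamma_{e,-})\in\Omega_k$ meeting $\tilde l$. Hence, up to finitely many excursions occurring before $\gamma$ first ``settles into'' the combinatorics near the cusp, $N_{\mathbf v}(k)(t)$ depends only on the forward geodesic ray, i.e.\ on the forward endpoint $\tilde\gamma({\mathbf v})_+$ together with the starting point on that ray — and in fact the asymptotic rate $\lim_t N_{\mathbf v}(k)(t)/t$ depends only on the $\Gamma$-orbit of the forward endpoint (changing the basepoint along a fixed ray, or applying a deck transformation, alters $N$ by a bounded amount and shifts $t$ by a bounded amount). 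For ${\mathbf v}={\mathbf v}_x$ the lift $\tilde\beta_x$ is the vertical ray with endpoint $x$, so the rate in question is a function $F_k(x)$ of $x\in(0,1)$ alone.

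\textbf{Step 2: Identify $F_k(x)$ with the a.e.-defined limit from Theorem \ref{counting} via Fubini.}
Work in the $(x,y,t)$ coordinates on the full-measure subset of $T_1\HH$, with $\tilde\mu=\tilde\nu\times dt$ and $\tilde\nu=(\pi\,\text{area}(S))^{-1}(x-y)^{-2}\,dx\,dy$. Theorem \ref{counting} gives a $\tilde\mu$-full-measure (equivalently, pulling back, a $\mu$-full-measure) set $\mathcal R\subset T_1S$ on which $\lim_t N_{\mathbf v}(k)(t)/t = k/(\pi\,\text{area}(S))$ simultaneously for all $k\in(0,3)$; lift $\mathcal R$ to $\tilde{\mathcal R}\subset T_1\HH$. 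By Fubini applied to $\tilde\nu\times dt$, for $\tilde\nu$-a.e.\ pair $(x,y)$ the vector $(x,y,t)$ lies in $\tilde{\mathcal R}$ for a.e.\ $t$; since the limit is flow-invariant it then holds for \emph{every} $t$ for which $(x,y,t)$ is defined, hence for the whole geodesic with endpoints $(x,y)$. Because $\tilde\nu$ is equivalent to $dx\,dy$ on $(-1,0)\times(-\infty,0)$ (say), Fubini once more in the $y$ variable shows that for a.e.\ $x\in(0,1)$ there exists \emph{some} $y$ with $(-x)\leftrightarrow\,$(a point with forward endpoint $x$) — more precisely, for a.e.\ $x$ the set of $y$ for which the geodesic $(x,y)$ realizes the limit has full measure and in particular is nonempty. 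But by Step 1 the rate depends only on the forward endpoint $x$ (and $\Gamma$-orbit thereof), so $F_k(x)=k/(\pi\,\text{area}(S))$ for a.e.\ $x\in(0,1)$, which is exactly the assertion for ${\mathbf v}={\mathbf v}_x$.

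\textbf{Main obstacle.} The one genuinely nontrivial point is Step 1: justifying that the \emph{asymptotic} counting rate $\lim_t N_{\mathbf v}(k)(t)/t$ is unchanged when we replace a ``generic'' vector with forward endpoint $x$ by the specific vertical-ray vector ${\mathbf v}_x$, i.e.\ that moving the basepoint to $x+2i$ and restarting the count costs only $O(1)$ excursions and an $O(1)$ change in $t$. This requires the fact — available from Section \ref{normalize}, in particular Lemma \ref{ab} and the non-accumulation of excursions — that along any fixed geodesic ray the $k$-depth parameters $t_i$ satisfy $t_i\to\infty$ with $t_{i}-t_{i-1}$ bounded below, so that only finitely many excursions can occur in any bounded initial segment; changing the basepoint by a bounded hyperbolic distance therefore changes the count by a bounded amount and the rate not at all. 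Once this is in hand, the Fubini transfer in Step 2 is routine.
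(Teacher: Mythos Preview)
Your Fubini transfer in Step~2 is fine and is exactly how the paper produces, for a.e.\ $x$, a vector ${\mathbf v}\in\mathcal{E}$ whose geodesic has a lift $\tilde\gamma$ with $\tilde\gamma_+=x$. The gap is in Step~1, specifically in your ``Main obstacle'' paragraph: the assertion that passing from such a ${\mathbf v}$ to ${\mathbf v}_x$ costs only $O(1)$ excursions is not justified and is in fact false. The vertical ray $\tilde\beta_x$ and the semicircular lift $\tilde\gamma$ are \emph{different} geodesics; you are not ``moving the basepoint along a fixed ray'' but replacing one geodesic by another asymptotic one. Their excursions into the cusp correspond one--to--one eventually, but the \emph{depths} differ: an excursion of $\tilde\gamma$ of depth $d$ matches an excursion of $\tilde\beta_x$ of some nearby depth $d'$, with $|d-d'|$ controlled by the (shrinking) distance between the two geodesics at that stage. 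For any fixed $k$ there are infinitely many excursions of $\tilde\gamma$ with depth in an arbitrarily small window around $k$, and each of these may flip from being a $k$-excursion to not being one when you pass to $\tilde\beta_x$. So $|N_{\mathbf v}(k)(t)-N_{{\mathbf v}_x}(k)(t)|$ is not bounded, and the non-accumulation / lower bound on $t_i-t_{i-1}$ does not help here---that only controls how many excursions lie in a bounded \emph{time} interval, not how many have depth near a given threshold.

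The paper repairs this with a sandwich in the parameter $k$ rather than an $O(1)$ bound. Since $\tilde\gamma$ and $\tilde\beta_x$ are eventually within $\epsilon$ of each other, every $(k-\epsilon)$-excursion of $\gamma$ forces a $k$-excursion of $\beta_x$ and every $k$-excursion of $\beta_x$ forces a $(k+\epsilon)$-excursion of $\gamma$ (up to finitely many terms and a bounded time shift). Hence
\[
\frac{k-\epsilon}{\pi\,\mathrm{area}(S)}=\lim_{t\to\infty}\frac{N_{\mathbf v}(k-\epsilon)(t)}{t}
\le \liminf_{t\to\infty}\frac{N_{{\mathbf v}_x}(k)(t)}{t}
\le \limsup_{t\to\infty}\frac{N_{{\mathbf v}_x}(k)(t)}{t}
\le \lim_{t\to\infty}\frac{N_{\mathbf v}(k+\epsilon)(t)}{t}=\frac{k+\epsilon}{\pi\,\mathrm{area}(S)},
\]
and letting $\epsilon\to 0$ gives the result. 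In other words, what actually makes Step~1 go through is not a finite discrepancy in the count but the \emph{continuity in $k$} of the limit already established in Theorem~\ref{counting}; your writeup needs to invoke that rather than an $O(1)$ basepoint argument.
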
 

\begin{proof}
For almost all $x\in (0,1)$ there is a vector ${\mathbf v}\in \mathcal{E}$ for which the limit (\ref{overlap}) of Theorem \ref{counting} holds, so that the geodesic $\gamma$ has a lift $\tilde{\gamma}$ with $\tilde{\gamma}_+=x.$
 The geodesics $\tilde{\beta}_x$  and  $\tilde{\gamma}$ are asymptotic and therefore, given $\epsilon >0$, beyond some point on each geodesic,   the geodesics are within $\epsilon$ of one another. It follows that for all $k\in (0, 3)$ and $\epsilon$ sufficiently small.
\beqq
\frac{ k-\epsilon }{\pi\, \text{area}(S) } =\lim_{t\rightarrow\infty}\frac{ N_{\mathbf v}(k-\epsilon)(t)}{t}\leq \liminf_{t\rightarrow\infty}\frac{ N_{{\mathbf v}_x}(k )(t)}{t}\leq
\eeqq
\beqq
 \limsup_{t\rightarrow\infty}\frac{ N_{{\mathbf v}_x}(k )(t)}{t}\leq\lim_{t\rightarrow\infty}\frac{ N_{\mathbf v}(k+\epsilon)(t)}{t}=\frac{ k+\epsilon }{\pi\, \text{area}(S) }  .
 \eeqq
Since this holds for all $\epsilon>0$,  limits (\ref{overlap})  hold for ${\mathbf v}={\mathbf v}_x.$  
 \end{proof} 
 
\noindent{\bf Remark.} There is a version of the proposition for approximating excursions that will be proved in Section \ref{behave}.

\section{The Sequence of Excursion Depths}\label{xdepth}
\subsection{The discrete logarithm law}\label{sectionlog}

We shall define values $D({\mathbf v})(n)$ and  $D^a({\mathbf v})(n)$ that  quantify the depth of the $ n^{th}$  $2$-excursion, or approximating $2$-excursion, along the geodesic $\gamma({\mathbf v})$.  Although it is possible to do the analysis, there is   little to be gained working with arbitrary $k<2$.  

 If $\gamma$ is tangent  to the   neighborhood $C_{d_n}$ of the cusp $P$ at the point $\gamma(t_n)$, then we define $D({\mathbf v})(n)=d_n$.  Also, set $D^a({\mathbf v})(j)=D({\mathbf v})(n_j) $, where  $\gamma(t_{n_j})$ is the $j^{th}$ approximating $2$-excursion along $\gamma$.   These are defined for almost all ${\mathbf v}\in T_1S.$

\begin{thm}\label{2thm}
For almost all ${\mathbf v}\in T_1S$
 \begin{equation}\label{discsullivan}
\limsup_{n\rightarrow\infty}-\frac{\log D^*({\mathbf v})(n)}{\log n} =1.
\end{equation}
\end{thm}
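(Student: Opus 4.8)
The plan is to discretize Sullivan's logarithm law \cite{sullivan}. For $\mathbf{v}\in T_1S$ and $t\ge 0$ let $\rho(\mathbf{v},t)$ be the penetration depth of $\gamma(\mathbf{v})(t)$ into the cusp $P$: if $C_d$ is the smallest cusp neighborhood of $P$ containing $\gamma(\mathbf{v})(t)$ and $d<1$, set $\rho(\mathbf{v},t)=-\log d$, and set $\rho(\mathbf{v},t)=0$ otherwise. Sullivan's theorem, whose Borel--Cantelli argument for cusp excursions is carried out one cusp at a time, gives $\limsup_{t\to\infty}\rho(\mathbf{v},t)/\log t=1$ for almost every $\mathbf{v}$; since $\log t$ is slowly varying, the same value results if $\rho(\mathbf{v},t)$ is replaced by its running maximum $F(\mathbf{v},t)=\sup_{0\le s\le t}\rho(\mathbf{v},s)$. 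The bridge to the sequence $D(\mathbf{v})(n)=d_n$ is the elementary observation that along the arc of the $n$-th $2$-excursion $\gamma(t_n)$ the function $s\mapsto\rho(\mathbf{v},s)$ attains its maximum exactly at $s=t_n$, with value $-\log d_n$ (shallow excursions, $1\le d_n<2$, contributing nothing), while $\rho$ vanishes off the cusp arcs; hence $F(\mathbf{v},t)=\max\{-\log d_n:t_n\le t\}$ up to the single excursion, if any, straddling the instant $t$. Throughout, $\mathbf{v}$ ranges over the full-measure set where both Theorem \ref{counting} and Sullivan's law hold.

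First I would feed in Theorem \ref{counting} with $k=2$: for such $\mathbf{v}$, $N_{\mathbf{v}}(2)(t)/t\to 2/(\pi\,\text{area}(S))$, so the depth parameters satisfy $t_n\sim\frac{1}{2}\pi\,\text{area}(S)\,n$, and in particular $\log t_n\sim\log n$. The upper bound in \eqref{discsullivan} is then immediate, since $-\log d_n=\rho(\mathbf{v},t_n)\le F(\mathbf{v},t_n)$ gives $\limsup_n(-\log d_n)/\log n=\limsup_n F(\mathbf{v},t_n)/\log t_n\le\limsup_{t\to\infty}F(\mathbf{v},t)/\log t=1$. For the reverse inequality, pick $t_k\to\infty$ with $F(\mathbf{v},t_k)\ge(1-\epsilon)\log t_k$; the maximum is realized at the tangency point $t_{m_k}$ of an excursion that straddles or precedes $t_k$, so $-\log d_{m_k}\ge(1-\epsilon)\log t_k$, and one only has to check $\log t_{m_k}\sim\log t_k$. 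This holds because an excursion reaching depth $d$ has an arc inside $C_1$ of length $-2\log d+O(1)$, so $t_{m_k}\le t_k+2(-\log d_{m_k})+O(1)$, and by the upper bound just proved $-\log d_{m_k}=O(\log t_{m_k})$, forcing $t_{m_k}=t_k(1+o(1))$; combined with $\log m_k\sim\log t_{m_k}\sim\log t_k$ this yields $\limsup_n(-\log d_n)/\log n\ge 1-\epsilon$ for every $\epsilon>0$, establishing \eqref{discsullivan} for $*=$ nothing.

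The approximating case is then formal. Writing $\tau_j=t_{n_j}$ for the depth parameter of the $j$-th approximating $2$-excursion, \eqref{overlap} and \eqref{app} (with $\mathcal{A}(2)=2\log 2$) give $N_{\mathbf{v}}^a(2)(t)/N_{\mathbf{v}}(2)(t)\to\log 2$, so the overall index obeys $n_j\sim j/\log 2$, whence $\log n_j\sim\log j$. Every $2$-excursion with $d_n<1$ is automatically an approximating $2$-excursion, and the $\limsup$ computed in the preceding paragraph is realized along a sequence of indices $n$ with $d_n\to 0$; for such $n$ one has $n=n_{j(n)}$ with $\log j(n)\sim\log n$. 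Conversely $\{n_j\}$ is a subsequence of all excursion indices and $\log n_j\sim\log j$, which yields the bound $\le 1$. Hence
$$
\limsup_{j\to\infty}\frac{-\log D^a(\mathbf{v})(j)}{\log j}
=\limsup_{n\to\infty}\frac{-\log D(\mathbf{v})(n)}{\log n}=1 ,
$$
which is \eqref{discsullivan} for $*=a$.

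The one genuinely delicate point is the bookkeeping in the second paragraph: matching the running maximum of the continuous penetration-depth function with the discrete sequence $\{-\log d_n\}$ and controlling how far the deepest excursion so far can lag behind the current time. What makes this work is exactly the interplay exploited above --- a very deep excursion is long in proportion to its depth but, by Sullivan's upper bound, can only occur at a proportionally large value of $t$ --- so the lags are logarithmically negligible and do not disturb the $\limsup$. Everything else is the elementary upper-half-plane geometry of horoball arcs and the linear-growth estimate $t_n\asymp n$ furnished by Theorem \ref{counting}.
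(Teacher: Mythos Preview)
Your argument is correct and follows the paper's route: both discretize Sullivan's logarithm law along the excursion times, using $t_n\asymp n$ from Theorem~\ref{counting} to replace $\log t_n$ by $\log n$. The packaging differs in that the paper phrases Sullivan in terms of $\mathrm{dist}(\mathbf v)(t)$ and proves a separate lemma giving $\mathrm{dist}(\mathbf v)(\tau_i)\sim -\log d(\gamma(\tau_i))$, then appeals to ``the proof of the logarithm law in \cite{sullivan}'' for the passage from the continuous $\limsup$ to the subsequence $\{\tau_i\}$, treating both $*$ cases at once; you instead work directly with the cusp penetration $\rho$ (bypassing that lemma), make the subsequence step explicit via the running maximum $F$ and the excursion--length estimate, and establish $*=\text{nothing}$ first before deducing $*=a$ via $n_j\sim j/\log 2$. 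One minor point in your lower-bound step: you assert $\log t_{m_k}\sim\log t_k$, but in the ``precedes'' branch you have only argued $t_{m_k}\le t_k$; the reverse inequality does follow from the upper half of \eqref{discsullivan} already proved, yet all you actually need is $\log m_k\le(1+o(1))\log t_k$, which is immediate from $m_k\le N_{\mathbf v}(2)(t_k)+1$ in both branches.
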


This gives a discretization of the logarithm law \cite{sullivan} in dimension two. We see in Section \ref{0thmtheta} how Theorem \ref{theta} follows from the above.

Let $\text{dist}({\mathbf v})(t)$
denote the distance on $S$ between the points $\gamma({\mathbf v})(0)$ and $\gamma({\mathbf v})(t)$ and let $d(\gamma(t))$ be the area of the largest cusp neighborhood that does not contain $\gamma(t).$ 

\begin{lemma}
Suppose   $\{t_{n}\}$ is a sequence of numbers so that $\gamma(t_{n})\in C_2$ for all $n\in \NN$ and  at least one of the sequences
$\{\text{dist}({\mathbf v})(t_{n})\}$  or $\{-\log d(\gamma(t_{n}))\} $ diverges to infinity, 
then
\beq\label{limit1}
\lim_{n\rightarrow\infty}-\frac{\text{dist}({\mathbf v})(t_{n})}{\log d(\gamma(t_{n}))}=1.
\eeq
\end{lemma}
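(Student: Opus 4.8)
The plan is to work in the universal cover and exploit the fact that, near a cusp, the hyperbolic metric is essentially logarithmic in the horocyclic height. First I would fix a lift $\tilde\gamma$ of $\gamma = \gamma(\mathbf v)$ and, for each $n$, choose the lift of the excursion point $\gamma(t_n)$ so that it lies in a fixed fundamental horoball neighbourhood of the cusp at infinity; by the normalization of Section \ref{normalize} we may take this to be (a translate of) $H_\eta$, and the relevant lift $\tilde\gamma_n(t_n)$ is the point of tangency with $H_{1/d(\gamma(t_n))}$, which has imaginary part exactly $1/d(\gamma(t_n))$. The key geometric input is the standard estimate that the hyperbolic distance in $\HH$ from a fixed basepoint (say a point on $\partial H_\eta$) to a point $z$ with $\mathrm{Im}(z) = 1/d$ large is $-\log d + O(1)$, with the $O(1)$ bounded uniformly because the horizontal displacement of $z$ is confined to a fundamental domain of $\mathrm{Stab}(\infty)$, i.e. an interval of length $1$. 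More precisely, the distance from $\eta i$ to $x + \frac{i}{d}$ is $\log\frac{1}{\eta d} + O(1)$ once $1/d \ge \eta$, uniformly in $x \in [0,1]$.

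Next I would relate $\mathrm{dist}(\mathbf v)(t_n)$, which is computed downstairs on $S$, to this upstairs distance. We have $\mathrm{dist}(\mathbf v)(t_n) = d_S(\gamma(0), \gamma(t_n)) \le d_\HH(\tilde\gamma(0), \tilde\gamma_n(t_n))$ trivially, but the reverse needs care since the downstairs distance could be realized by a shortcut through the cusp or through the thick part. The clean way around this: the geodesic segment of $\tilde\gamma$ of length $t_n - 0$ (arc length) joins $\tilde\gamma(0)$ to the chosen lift, so $d_\HH(\tilde\gamma(0),\tilde\gamma_n(t_n)) \le t_n$... but that is not directly $\mathrm{dist}(\mathbf v)(t_n)$ either. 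Instead I would argue: pick the basepoint $\gamma(0)$ in a fixed compact part of $S$ (or just note its lift is at bounded distance from $\partial H_\eta$, uniformly, after replacing $\gamma(0)$ by its nearest projection — the difference is $O(1)$ and disappears after dividing by $\log d(\gamma(t_n)) \to \infty$). Then $\mathrm{dist}(\mathbf v)(t_n)$ and $d_\HH(\text{lift of }\gamma(0), \tilde\gamma_n(t_n))$ differ by $O(1)$: the upper bound is the projection-doesn't-increase-distance inequality, and the lower bound holds because any path on $S$ from $\gamma(0)$ to $\gamma(t_n)$ lifts to a path in $\HH$ from the chosen lift of $\gamma(0)$ to \emph{some} lift of $\gamma(t_n)$, and all such lifts that are tangent to a horoball of height $1/d(\gamma(t_n))$ either lie in the $\mathrm{Stab}(\infty)$-orbit of $\tilde\gamma_n(t_n)$ (same height, horizontal shift, no distance cost once we are measuring from a far-away basepoint — actually these are at distance growing with the shift, so one must instead observe that the \emph{minimizing} lift is at height $\ge 1/d(\gamma(t_n)) - O(1)$, hence distance $\ge -\log d(\gamma(t_n)) - O(1)$). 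Hence
\[
\mathrm{dist}(\mathbf v)(t_n) = -\log d(\gamma(t_n)) + O(1).
\]

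Finally I would conclude: dividing by $-\log d(\gamma(t_n))$ gives
\[
-\frac{\mathrm{dist}(\mathbf v)(t_n)}{\log d(\gamma(t_n))} = 1 + \frac{O(1)}{-\log d(\gamma(t_n))},
\]
and by hypothesis either $-\log d(\gamma(t_n)) \to \infty$ directly, or $\mathrm{dist}(\mathbf v)(t_n) \to \infty$ which, via the just-established equality $\mathrm{dist}(\mathbf v)(t_n) = -\log d(\gamma(t_n)) + O(1)$, forces $-\log d(\gamma(t_n)) \to \infty$ as well. In either case the error term vanishes in the limit, giving \eqref{limit1}. The main obstacle I anticipate is the lower bound on $\mathrm{dist}(\mathbf v)(t_n)$: one must rule out that the point $\gamma(t_n)$, though deep in the cusp, is geometrically close to $\gamma(0)$ via a path that stays in the cusp region — this cannot happen because getting into $C_{d(\gamma(t_n))}$ from the boundary $\partial C_\eta$ already costs $-\log d(\gamma(t_n)) - \log\eta$ in hyperbolic distance (the cusp is, metrically, a half-infinite cylinder whose cross-sectional circumference shrinks exponentially, so radial depth $=$ distance $+ O(1)$), and this is exactly the estimate needed. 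Everything else is the uniform-$O(1)$ bookkeeping that goes away after dividing by a quantity tending to infinity.
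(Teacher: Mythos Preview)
Your proposal is correct, and the core estimate you aim for, namely $\text{dist}(\mathbf v)(t_n) = -\log d(\gamma(t_n)) + O(1)$ with the $O(1)$ absorbed upon dividing by the divergent term, is exactly what the paper proves. However, the paper's route is cleaner and avoids the lift-juggling that causes your middle paragraphs to meander. The paper works entirely on $S$: it fixes $s$ with $\gamma(0)\notin C_s$, sets $a$ equal to the distance from $\gamma(0)$ to $\partial C_s$, and obtains both bounds at once by considering a piecewise path $\beta_n$ that runs from $\gamma(0)$ to $\partial C_s$ (length $a$), around $\partial C_s$ (length $\le s$), radially from $\partial C_s$ to $\partial C_{k_n}$ (length $\log s - \log k_n$), and around $\partial C_{k_n}$ to $\gamma(t_n)$ (length $\le k_n$), where $k_n = d(\gamma(t_n))$. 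This yields the two-sided estimate $a + \log s - \log k_n < \text{dist}(\mathbf v)(t_n) < a + s + \log s - \log k_n + k_n$, the lower bound holding because any path from $\gamma(0)$ to $\gamma(t_n)$ must cross both $\partial C_s$ and $\partial C_{k_n}$. Your ``main obstacle'' paragraph is precisely this lower bound, and your upper bound is really the same path viewed upstairs; the detour through choosing specific lifts of $\gamma(0)$ and $\gamma(t_n)$ and worrying about which $\text{Stab}(\infty)$-translate realizes the minimum is unnecessary once one sees that distance on $S$ and radial depth into the cusp agree up to bounded error by direct construction downstairs.
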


\begin{proof}
Let $ \alpha:[0,\gamma(t_{n})] \rightarrow S$ be the minimal length geodesic from $
\alpha(0)=\gamma(0)$ to $\gamma(t_{n})$. Suppose $d(\gamma (t_{n}))  =k_n$.
Choose $s>1$ so that $\gamma(0)\not\in C_s$. Let $ a$ denote the minimal distance from  $\gamma(0)$ to the boundary of $ C_s$.  By an easy computation in $\HH$, the distance from the boundary of $C_s$ to $C_{k_n}$ is $\log s-\log k_n$, while the distance around the boundary of $C_k$ for $k>1$ is $k$. Consider  the piecewise geodesic $\beta_n$ that take the shortest path from $\alpha(0)$ to the boundary of $C_s$, around the boundary of $C_s$ to the start of the minimal length geodesic arc from the boundary of $C_s$ to $C_{k_n}$ and around the boundary of $C_{k_n}$ to $\alpha (r_{n})$.  Length estimates using the triangle inequality  show that
\beqq
a+  \log s-\log k_n<\text{dist}({\mathbf v})(t_{n})< a+ s+\log s - \log k_n+k_n.
\eeqq

It is clear from the equation that $\{\text{dist}({\mathbf v})(t_{n})\}$  diverges to infinity if and only if $\{-\log k_n\} $ diverges to infinity.
Since $k_n\rightarrow 0$,   while the remaining values are bounded, we have
\beqq
\lim_{n\rightarrow\infty} \frac{a-\log k_{n}+\log s    }{a+ s+ \log s - \log k_n +k_n  }=1.
\eeqq
\end{proof}

\noindent {\em Proof of Theorem \ref{2thm}.}
 Let  $d(\gamma(\tau_i))=D^*({\mathbf v})(i)$ where the $\tau_i$ are  depth parameters for the *2-excursions along $\gamma.$ (Here * denotes the word approximating or nothing.)
 Then Theorem \ref{counting} takes the form
\beqq 
 \lim_{i\rightarrow\infty}\frac{N^*_{\mathbf v}(k)(\tau_i)}{\tau_i} =\lim_{i\rightarrow\infty}\frac{i}{\tau_i}= \frac{ \mathcal{A}^*(k) }{\pi\, \text{area}(S)}.
\eeqq
In particular, this gives
\beq\label{loglog}
\lim_{i\rightarrow\infty}\frac{\log i}{\log \tau_i}=1
\eeq
It follows from the proof of the logarithm law in \cite{sullivan},   that for almost all ${\mathbf v}\in T_1S$
\beq\label{sullivan2}
 \limsup_{t\rightarrow\infty}\frac{\text{dist}({\mathbf v})(t)}{\log
t}=  \limsup_{i\rightarrow\infty}\frac{\text{dist}({\mathbf v})(\tau_{i})}{ \log \tau_{i} }=1. 
 \eeq    
Consequently, $\text{dist}({\mathbf v})(\tau_{i}) \rightarrow\infty$ and by the previous lemma 
\beq\label{dog}
\lim_{i\rightarrow \infty} -\frac{\log(d(\gamma(\tau_i))}{\text{dist}({\mathbf v})(\tau_{i})}=1.
\eeq
   For the remainder of the argument we suppose that ${\mathbf v}\in \mathcal{E}_s\subset \mathcal{E}$ where   $\mathcal{E}_s$ is a set of full measure  for which the limit (\ref{sullivan2}) holds.     
 
Employing all of the limits (\ref{loglog}), (\ref{sullivan2}) and (\ref{dog}), we have
\beqq
\limsup_{i\rightarrow\infty} -\frac{\log(D^*({\mathbf v})(i))}{\log i}=
\limsup_{i\rightarrow\infty} -\frac{\log(d(\gamma(\tau_i)))}{\log i}=
\eeqq 
\beqq
\limsup_{i\rightarrow\infty}\left (-\frac{\log(d(\gamma(\tau_i))}{\text{dist}({\mathbf v})(\tau_{i})}\times\, 
\frac{\text{dist}({\mathbf v})(\tau_{i})}{\log \tau_i}\times\, \frac{\log \tau_i}{\log i}\right )=
\limsup_{n\rightarrow\infty}\frac{\text{dist}({\mathbf v})(\tau_{i})}{ \log \tau_{i} }=1. 
 \eeqq   \qed 
 \\
\noindent {\bf Remark.} The limit (\ref{loglog}), which follows from Theorem \ref{counting} and is used above, can also be easily derived from the estimates  in \cite{strat}. 
 
\subsection{Some interesting averages}
Using Theorem \ref{counting} we derive four averages involving the values $D^*({\mathbf v})(n)$.   They will later be translated into Theorem \ref{thetasums}.

  \begin{thm}\label{before}
  For almost all ${\mathbf v}\in T_1S$  
\beqq\label{nothing}
\lim_{ n \rightarrow\infty}\frac{1}{n}\sum_{i=1}^n  D^*({\mathbf v})(i)=
\left\{ \begin{array}{ll}
1 & \mbox{if\, *=nothing}\\
1/(2\log 2)  & \mbox{if *=a }
\end{array}
\right.
\eeqq
and
\beqq\label{a}
\lim_{ n \rightarrow\infty}\frac{1}{n}\sum_{i=1}^n  \log D^*({\mathbf v})(i)= 
\left\{ \begin{array}{ll}
\log 2- 1 & \mbox{if\, *=nothing}\\
\frac{1}{2}\log 2 -1  & \mbox{if *=a }
\end{array}
\right.
\eeqq
\end{thm}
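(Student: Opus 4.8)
The plan is to extract these averages from Theorem \ref{counting} by a change-of-variables / layer-cake argument, exactly paralleling the way Corollary \ref{return} was extracted. Fix $\mathbf{v}$ in the full-measure set where the conclusion of Theorem \ref{counting} holds for all $k \in (0,2]$ (and, via the Remark after the proof of Theorem \ref{counting}, the extended version for $2 \le k < 3$), and let $\{\tau_i\}$ be the ordered depth parameters of the $*2$-excursions along $\gamma = \gamma(\mathbf{v})$, with $D^*(\mathbf{v})(i)$ the corresponding depths. The key observation is that $D^*(\mathbf{v})(i) \le k$ precisely when $\gamma(\tau_i)$ is simultaneously a $*k$-excursion, so the counting function of $\{i \le n : D^*(\mathbf{v})(i) \le k\}$ is essentially $N^*_{\mathbf v}(k)(\tau_n)$, up to a bounded error coming from the alternation of depth and excursion parameters. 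Dividing by $n = N^*_{\mathbf v}(2)(\tau_n)$ and using Theorem \ref{counting} twice, one gets that the \emph{empirical distribution} of the sample $D^*(\mathbf{v})(1), \dots, D^*(\mathbf{v})(n)$ converges weakly, as $n \to \infty$, to the probability measure on $(0,2]$ with cumulative distribution function $F^*(k) = \mathcal{A}^*(k)/\mathcal{A}^*(2)$.

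Once that weak-convergence statement is in hand, each of the four averages is just $\int_0^2 \phi\, dF^*$ for $\phi(z) = z$ or $\phi(z) = \log z$. Concretely: $\mathcal{A}(2) = 2\log 2$, while $\mathcal{A}(z) = z$ on $(0,1]$ and $\mathcal{A}(z) = 2 - z + 2\log z$ on $[1,2]$, so $F^a$ has density $(2\log 2)^{-1}$ on $(0,1)$ and $(2\log 2)^{-1}(2/z - 1)$ on $(1,2)$, whereas $F$ (the $*=$ nothing case, using $\mathcal{A}^*(z)=z$, $\mathcal{A}^*(2)=2$) is just the uniform density $1/2$ on $(0,2)$. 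Then $\int_0^2 z\, dF(z) = \frac{1}{2}\int_0^2 z\,dz = 1$, $\int_0^2 \log z\, dF(z) = \frac12\int_0^2 \log z\, dz = \log 2 - 1$, and the $*=a$ integrals are the elementary computations $\int_0^1 \frac{1}{2\log 2}z\,dz + \int_1^2 \frac{1}{2\log 2}(2-z)\,dz = \frac{1}{2\log 2}\cdot(\tfrac12 + \tfrac12) = \frac{1}{2\log 2}$ and $\int_0^1 \frac{\log z}{2\log 2}\,dz + \int_1^2 \frac{(2/z-1)\log z}{2\log 2}\,dz$, which evaluates to $\frac12\log 2 - 1$ after integrating $\int_1^2 \frac{\log z}{z}\,dz = \frac12(\log 2)^2$ and $\int_1^2 \log z\, dz = 2\log 2 - 1$. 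These match the claimed values in Theorem \ref{thetasums}, which is the point.

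To make the weak-convergence step rigorous with minimal fuss, I would actually bypass general weak-convergence machinery and argue directly on Cesàro sums. For the function $\phi(z) = z$ on $[0,2]$: using $D^*(\mathbf v)(i) = \int_0^{2}\boldsymbol{\chi}_{\{D^*(\mathbf v)(i) \le k\}}\,dk$ (valid since $0 < D^*(\mathbf v)(i) \le 2$), we get $\sum_{i=1}^n D^*(\mathbf v)(i) = \int_0^2 \#\{i \le n : D^*(\mathbf v)(i) \le k\}\, dk$. Each inner count differs from $N^*_{\mathbf v}(k)(\tau_n)$ by at most a constant (independent of $n$ and $k$, by the alternation estimate already used in \eqref{floweqn}), so $\frac1n\sum_{i=1}^n D^*(\mathbf v)(i) = \int_0^2 \frac{N^*_{\mathbf v}(k)(\tau_n)}{n}\,dk + O(1/n)$, and since $\frac{N^*_{\mathbf v}(k)(\tau_n)}{n} \to \frac{\mathcal{A}^*(k)}{\mathcal{A}^*(2)}$ pointwise in $k$ and stays in $[0,1]$, dominated convergence gives the limit $\frac{1}{\mathcal{A}^*(2)}\int_0^2 \mathcal{A}^*(k)\,dk$. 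For $\phi(z) = \log z$ one writes $\log z = -\int_z^2 \frac{dk}{k} + \log 2$ (for $z \in (0,2]$), hence $\sum_{i=1}^n \log D^*(\mathbf v)(i) = n\log 2 - \int_0^2 \frac{1}{k}\#\{i \le n : D^*(\mathbf v)(i) \le k\}\,dk$; the same bounded-error replacement and dominated convergence (noting $\mathcal{A}^*(k)/k$ is bounded and integrable near $0$, since $\mathcal{A}^*(k) \le k$ there, actually $= k$) yield $\log 2 - \frac{1}{\mathcal{A}^*(2)}\int_0^2 \frac{\mathcal{A}^*(k)}{k}\,dk$, and one checks this equals the stated value. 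The main obstacle is a minor one: verifying that the $O(1)$ error term in the count is genuinely uniform in $k$ and $i$ and that the resulting integrands are dominated uniformly in $n$ — in particular controlling the behavior near $k = 0$ for the logarithmic average — but this is exactly the kind of bookkeeping already handled in \eqref{floweqn} and Corollary \ref{return}, so I expect no real difficulty. The only genuinely new content is the elementary evaluation of the four integrals of $\mathcal{A}^*$, which I have sketched above.
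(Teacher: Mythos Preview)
Your approach is essentially the paper's own: recognize via Theorem~\ref{counting} that the empirical distribution of the depths $D^*(\mathbf v)(i)$ has limiting c.d.f.\ $F^*(k)=\mathcal{A}^*(k)/\mathcal{A}^*(2)$ on $(0,2]$, then compute $\int\phi\,dF^*$ for $\phi(z)=z$ and $\phi(z)=\log z$. The paper's proof is in fact terser than yours (it simply names $F^*$ and $H^*(x)=F^*(e^x)$ and asserts the expectations), so your layer-cake argument supplies more justification than the original.

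There is, however, one slip in your direct computation for $\phi(z)=z$. The identity you wrote,
\[
D^*(\mathbf v)(i)=\int_0^{2}\boldsymbol{\chi}_{\{D^*(\mathbf v)(i)\le k\}}\,dk,
\]
is backwards: the right side equals $2-D^*(\mathbf v)(i)$. The correct layer-cake uses $\boldsymbol{\chi}_{\{D^*(\mathbf v)(i)>k\}}$, and the limit becomes $\int_0^2\bigl(1-F^*(k)\bigr)\,dk$ rather than $\int_0^2 F^*(k)\,dk$. Your answer for $*=\text{nothing}$ survives only by the coincidence $1=2-1$; for $*=a$ your formula would give $2-\tfrac{1}{2\log 2}$ instead of $\tfrac{1}{2\log 2}$. (Your logarithmic layer-cake, by contrast, has the inequality oriented correctly and yields the right values.) With this one sign fixed, your argument goes through and matches the paper.
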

  
\begin{proof} 
For $x\in (0,2]$ define the function
\beqq
F_{\mathbf v}^*(x)=
 \left\{ \begin{array}{ll}
x/2 & \mbox{if\, *=nothing}\\
 \mathcal{A}(x)/2\log 2  & \mbox{if *=a }
\end{array}
\right.
\eeqq 
By Theorem \ref{counting}, for almost all ${\mathbf v}\in T_1S$
\beqq
F_{\mathbf v}^*(x)=\lim_{n\rightarrow\infty}\frac{N_{\mathbf v}^*(x)(n)}{ N_{\mathbf v}^*(2)(n)}=\lim_{n\rightarrow\infty}
\frac{1}{n}\#\{  0<i\leq n \, |\, D^*({\mathbf v})(i)\leq x\}
\eeqq
 is the limiting distribution of the arithmetic function $D^*({\mathbf v}):\NN\rightarrow [0,2].$
 The first set of averages in Theorem \ref{before} then follow easily from Theorem \ref{counting}, by computing the expectation $\int_0^2x\,dF_{\mathbf v}^*(x).$
 
 In order to compute the second set of averages, observe that
 \beqq
  H^*_{\mathbf v}(x)=\lim_{n\rightarrow\infty}
\frac{1}{n}\#\{  0<i\leq n \, |\, \log D^*({\mathbf v})(i)\leq x\}
\eeqq
 converges for almost all ${\mathbf v}\in T_1S$. In fact, $H^*_{\mathbf v}(x)=F^*_{\mathbf v}(e^x).$ The second set of averages again follows by computing  $\int_{-\infty}^{\log 2}x\,dH_{\mathbf v}^*(x).$ 
 
 \end{proof}

\section{Diophantine Approximation with Respect to $\Gamma$}\label{approximation}
\subsection{$\Gamma$-rational numbers}

The orbit of $\infty$ under the action of $\Gamma$ is called the $\Gamma$-rational numbers  and is written
$\QQ(\Gamma).$ For $g\in \Gamma$ we have
\beq
g(z)=\pm\pmtx{p & r \cr q & s}(z) =\frac{pz+r}{qz+s}
\eeq
where the matrix belongs to $\text{SL}_2(\RR).$
Thus $g(\infty)= p/q$ and if $g(\infty)\geq 0$ the sign can be chosen so that $p, q\geq 0.$ It is easily shown that, for a non-negative element of $\QQ(\Gamma)$ the numbers $p$ and $q$ are uniquely determined \cite{haas}.  Thus each point of $(0,1)$ in the $\Gamma$-orbit of infinity has a well defined representation as a\lq\lq fraction" $p/q.$

We shall suppose that $P$ is the cusp of $S$ corresponding to the point $\infty\in\partial\HH.$  
Through the rest of this section we set $k=2$ and refer to an (approximating)  $2$-excursion simply as an (approximating) excursion. A lift $\tilde{\gamma}$ of $\gamma$ and an   excursion $e=\gamma(t_e)$ along $\gamma$ determine  a $\Gamma$-rational number  $p/q=\varphi(\tilde{\gamma}, e)$ as follows. Let $\sigma$ be the geodesic ray orthogonal to $\gamma$ at $e,$ heading directly out to infinity in the cusp neighborhood $C_2$ of  $P$ and let $\tilde{\sigma}$ denote the lift of $\sigma$ to $\HH$ so that  $\tilde{\sigma}(0)=\tilde{\gamma}(t_e)=\tilde{e}.$ Then set 
\beq
\varphi(\tilde{\gamma}, e)=\frac{p}{q}=\lim_{t\rightarrow\infty}\tilde{\sigma}(t).
\eeq

 Recall the definition of $\beta_x$ from Section \ref{special}. Given $x\in (0,1),$     $p/q\in \QQ(\Gamma)$ is said to be a  $\Gamma$-{\em convergent} of $x$,  if  $p/q=\varphi(\tilde{\beta}_x, a)$ for some   approximating excursion $a$ of $\beta_x.$ If  $p/q=\varphi(\tilde{\beta}_x, e)$ for an excursion $e$ that is not an approximating excursion, then $p/q$ is called a  {\em non-classical} $\Gamma$-convergent  approximating $x$. The sequence consisting of the $\Gamma$-convergents and the   non-classical $\Gamma$-convergents  will be called the sequence of  $\Gamma$-{\em n-convergents} of $x$.  If the number $x\in (0,1)$ has infinitely many distinct $\Gamma$-n-convergents  then we call $x$ a $\Gamma$-{\it irrational number}.  

 \subsection{The geometry of $\Gamma$-convergents}
 \begin{lemma}\label{frac}
Given $x\in (0,1)$,  $p/q\in \QQ(\Gamma)$ is a $\Gamma$-convergent to $x$ if and only if 
  $\tilde{\beta}_x$ intersects two or more geodesics in $\mathscr{L}$ with the endpoint $p/q.$

\end{lemma}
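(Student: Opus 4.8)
The plan is to unwind the definitions of $\Gamma$-convergent and of the map $\varphi$, and reduce the statement to a clean geometric fact about the vertical ray $\tilde\beta_x$. Recall that $p/q$ is a $\Gamma$-convergent to $x$ precisely when $p/q = \varphi(\tilde\beta_x, a)$ for some approximating excursion $a$ of $\beta_x$, and that $\varphi(\tilde\beta_x, a)$ is computed by taking the lift $\tilde\gamma = \tilde\beta_x$, the tangency point $\tilde e = \tilde\gamma(t_a)$ to a horocycle $H_{1/d(a)}$, and following the geodesic ray $\tilde\sigma$ out to the cusp; since $\tilde\gamma$ is the \emph{vertical} ray with endpoint $x$, the orthogonal ray $\tilde\sigma$ at $\tilde e$ heading into a cusp neighborhood is a horizontal horocyclic arc terminating at a parabolic fixed point $g(\infty) = p/q$ with $g\in\Gamma$, $g(\infty)\in(0,1)$. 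So the first step is to identify: $p/q$ is a $\Gamma$-convergent iff there is an element $g\in\Gamma$ with $g(\infty)=p/q$ and a lift of an approximating excursion of $\beta_x$ sitting on the horocycle $g(H_\eta)$ (or $g(H_{1/d})$) "over" $p/q$.

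Next I would translate "approximating excursion" into the configuration of lifts. By the definition in Section \ref{normalize}, an approximating $2$-excursion $e$ corresponds to a lift $\tilde\gamma_e$ with $(\tilde\gamma_+,\tilde\gamma_-)\in I$, tangent to $H_{1/d(e)}$; but the cross-section machinery also attaches an \emph{excursion parameter} $s_e$ where $\tilde\gamma_e$ meets a geodesic in $\mathscr L$. The key point I want to extract is: the excursion corresponds to $\tilde\beta_x$ crossing a lift of $l$ (a geodesic in $\mathscr L$) with endpoint $p/q$, and the "loop about $P$" condition ($\in I$) is exactly what forces $\tilde\beta_x$ to cross a \emph{second} such geodesic with the same endpoint $p/q$. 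Concretely: the geodesics in $\mathscr L$ with an endpoint at $p/q = g(\infty)$ are the images under $g$ of the geodesics $\tilde l_m$ and $\tilde l_{\tau_0+m}$ (the ones with an endpoint at infinity), i.e. they are $g(\tilde l_\tau)$ for the admissible values of $\tau$; these form a discrete family of geodesics all limiting to $p/q$, and a vertical ray $\tilde\beta_x$ with $x$ near $p/q$ will cross some consecutive run of them. I would show that "there is an approximating excursion to $C_2$ producing the value $p/q$" is equivalent to "$\tilde\beta_x$ crosses at least two of the geodesics in $\mathscr L$ incident to $p/q$", by carrying the whole picture to the cusp at infinity via $g^{-1}$: then $g^{-1}(\tilde\beta_x)$ is a geodesic with one endpoint $g^{-1}(x)$ and the other at $\infty$... no — $g$ sends $\infty$ to $p/q$, so $g^{-1}$ sends $p/q$ to $\infty$ and $g^{-1}(\tilde\beta_x)$ is the semicircle from $g^{-1}(x)$ to $\infty$, i.e. again a vertical ray, now with endpoint $g^{-1}(x)$, and the geodesics of $\mathscr L$ through $p/q$ become the vertical lines $\tilde l_\tau$ through $\infty$. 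So the claim becomes: $p/q$ is a $\Gamma$-convergent to $x$ iff the vertical ray over $g^{-1}(x)$ crosses at least two of the lines $\{\tilde l_\tau\}$, which happens iff $g^{-1}(x)$ lies between two such lines at distance... and this should match the definition of the region $I$ after applying $g^{-1}$, because $I$ was precisely built ($[(-1,0)\times(1,\infty)]\cup[(0,1)\times(-\infty,-1)]$) to encode "the geodesic wraps once around the cusp", i.e. passes between the lines $\tilde l_{-1}, \tilde l_0, \tilde l_1$ appropriately.

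So the argument has three steps: (1) rewrite $p/q$ being a $\Gamma$-convergent as existence of $g\in\Gamma$, $g(\infty)=p/q$, with $\tilde\beta_x$ having an approximating excursion whose $\varphi$-value is $p/q$; (2) conjugate by $g^{-1}$ to move $p/q$ to $\infty$, turning lifts of $l$ through $p/q$ into the vertical lines $\tilde l_\tau$ and $\tilde\beta_x$ into a vertical ray over $g^{-1}(x)$; (3) check that the defining condition "$(\tilde\gamma_+,\tilde\gamma_-)\in I$ with tangency to $C_2$" translates, under this conjugation, exactly to "the vertical ray crosses $\ge 2$ of the $\tilde l_\tau$'s" — equivalently $\ge 2$ geodesics of $\mathscr L$ through $p/q$ — using that the excursion parameter $s_e$ records an intersection with $\mathscr L$ and that crossing $I$ versus merely $J$ is the "extra loop" that produces a second intersection. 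The main obstacle I expect is Step (3): making the bookkeeping between the combinatorial data of the cross-section ($J$ vs.\ $I$, depth parameter vs.\ excursion parameter) and the literal count of intersections with $\mathscr L$ precise, in particular handling the two cases $U_\eta\in\Gamma$ and $U_\eta\notin\Gamma$ (where the relevant lines are $\tilde l_m$ alone, versus also $\tilde l_{\tau_0+m}$) uniformly, and confirming that "two or more" is the exact threshold — that a single crossing corresponds to a non-approximating excursion and two crossings to an approximating one — which is where the geometry of the strips cut out by consecutive lines $\tilde l_\tau$ and the width conditions defining $I$ must be matched exactly.
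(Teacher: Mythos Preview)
Your overall strategy --- conjugate by an element $g\in\Gamma$ with $g(\infty)=p/q$ to move the cusp at $p/q$ to $\infty$, and then read off the approximating condition as ``crosses at least two of the vertical lines $\tilde l_m$'' --- is exactly the paper's approach. But there is a genuine error in your Step~(2) that, if left uncorrected, makes Step~(3) collapse.

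You assert that $g^{-1}(\tilde\beta_x)$ is ``again a vertical ray, now with endpoint $g^{-1}(x)$.'' This is false. The endpoints of $\tilde\beta_x$ are $x$ and $\infty$, so the endpoints of $g^{-1}(\tilde\beta_x)$ are $g^{-1}(x)$ and $g^{-1}(\infty)$. Since $g(\infty)=p/q\neq\infty$, we have $g^{-1}(\infty)\neq\infty$, and $g^{-1}(\tilde\beta_x)$ is a genuine semicircle, not a vertical ray. If it \emph{were} a vertical ray, it would cross none of the vertical lines $\tilde l_\tau$, and your criterion ``crosses $\geq 2$ of the $\tilde l_\tau$'' would be vacuously false. (A related confusion appears in Step~(1), where you call $\tilde\sigma$ ``a horizontal horocyclic arc'': $\sigma$ is a geodesic ray, and in the $\tilde\beta_x$ lift it is a semicircle landing at $p/q$.)

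The fix is that $g^{-1}(\tilde\beta_x)$ is precisely the special lift $\tilde\gamma_a$ from Section~\ref{normalize}: it is tangent to $H_{1/d(a)}$, the lifted ray $\tilde\sigma$ is vertical to $\infty$, and by the definition of an approximating excursion its endpoints lie in $I=[(-1,0)\times(1,\infty)]\cup[(0,1)\times(-\infty,-1)]$. From this description one sees immediately that $\tilde\gamma_a$ meets $\tilde l_0$ and at least one of $\tilde l_{\pm 1}$, hence at least two geodesics in $\mathscr L$ with endpoint $\infty$; pushing forward by $g$ gives the $\geq 2$ geodesics in $\mathscr L$ with endpoint $p/q$ meeting $\tilde\beta_x$. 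The converse runs the same way. This is what the paper does, and once you replace ``vertical ray'' by ``the semicircle $\tilde\gamma_a$ with endpoints in $I$,'' your Step~(3) goes through without needing to split into the $U_\eta$ cases --- the integer translates $\tilde l_m$ already suffice.
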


\begin{proof}
Consider the  lift $\tilde{\gamma}_{a}$ of the geodesic 
$\gamma$  that  is associated with an approximating  excursion $a$. It is tangent to the half-plane $H_{\frac{1}{d(a)}}$ at the point $\tilde{\gamma}_{a}(t_a)=\tilde{a}.$ Thus the lift of $\sigma$ beginning at $\tilde{a} ,$ is a vertical ray 
from $\tilde{a}$ to infinity. Since $a$ is an approximating excursion, $\tilde{\gamma}_{a}$ has its endpoints in $I$ and intersects $\tilde{l}$ and at least one  of the other vertical rays with integral real part. It follows that at least two geodesics in $\mathscr{L}$ share the endpoint $p/q$ with $\tilde{\sigma}$ and intersect the geodesic ray $\tilde{\gamma}$. Conversely, if the ray $\tilde{\gamma}$ intersects two or more geodesics in $\mathscr{L}$ which share an endpoint, then that endpoint is $\varphi(\tilde{\gamma},a)$ for some approximating excursion along $\gamma$. 

 \end{proof}

\noindent{\bf Remark.} Using our definitions it is possible to get 0 as a convergent to a number $x$.   If $\Gamma=\text{PSL}_2(\ZZ)$, this will happen for irrational numbers $x=[a_1,a_2,\ldots]$ where $a_1\not= 1$ because $\tilde{\beta}_x$ will then intersect at least two geodesics in $\mathscr{L}$ with the endpoint 0. Otherwise, the arguments in  \cite{haas} show that the   $\Gamma$-convergents to $x$ are precisely the convergents coming from the partial quotients of the continued fraction expansion for $x$. This fact motivates the definitions we have used for $\Gamma$-convergents and the approximating excursions used to define them. Also, in this setting the non-classical $\Gamma$-convergents correspond to a subset of the nearest mediants of $x,$ 
\cite{bosma}.
 
\subsection{Almost all ${\mathbf v}_x$ are well behaved}\label{behave}

Let $\mathcal{E}^+\subset T_1S$ be the full measure subset so that for ${\mathbf v}\in \mathcal{E}^+$  all of the conclusions  of Theorems \ref{counting} and \ref{2thm} hold. Recall the definition of ${\mathbf v}_x$ from Section \ref{special}.

\begin{thm}\label{next}
For almost all $x\in (0,1),\,  {\mathbf v}_x\in  \mathcal{E}^+.$

\end{thm}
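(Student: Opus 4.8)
The plan is to transfer the full-measure conclusions of Theorems~\ref{counting} and~\ref{2thm} from a generic vector ${\mathbf v}\in\mathcal{E}^+$ to the associated vertical vector ${\mathbf v}_x$, for almost every $x\in(0,1)$. The basic mechanism is the same asymptotic-geodesic argument already used in Proposition~\ref{prop}: two asymptotic geodesics in $\HH$ eventually come arbitrarily close, and all the quantities appearing in the two theorems ($k$-excursion counts, approximating $k$-excursion counts, the depth functions $D^*$ and $D$, and the distance function $\text{dist}({\mathbf v})(t)$) are ``eventually determined'' by the forward endpoint of a lift together with the asymptotic behavior of the geodesic. So the first step is to observe that the set $G\subset(0,1)$ of $x$ for which there exists some ${\mathbf v}\in\mathcal{E}^+$ with a lift $\tilde\gamma$ having $\tilde\gamma_+=x$ is a full-measure subset of $(0,1)$; this follows because $\mathcal{E}^+$ has full measure in $T_1S$ and the map sending a lifted vector to its forward endpoint, restricted to a fundamental domain, pushes the invariant measure to a measure on $\partial\HH$ absolutely continuous with respect to Lebesgue measure (this is exactly the kind of statement implicit in the $(x,y,t)$-coordinate description of $\tilde\mu$ in Section~\ref{its inv}, and it is how Proposition~\ref{prop} was justified).

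Next, fix $x\in G$ and a witnessing ${\mathbf v}\in\mathcal{E}^+$ with lift $\tilde\gamma$, $\tilde\gamma_+=x$; write $\gamma=\gamma({\mathbf v})$ and recall that $\tilde\beta_x$ is the vertical ray over $x$, with $\beta_x=\gamma({\mathbf v}_x)$. For the counting statements~(\ref{overlap}) and~(\ref{app}) I would run the squeeze argument already displayed in the proof of Proposition~\ref{prop}, but now also for approximating excursions: given $\epsilon>0$, since $\tilde\beta_x$ and $\tilde\gamma$ are asymptotic they are within $\epsilon$ of each other past some point, so every $k$-excursion (resp. approximating $k$-excursion) of $\beta_x$ sufficiently far along is a $(k+\epsilon)$-excursion of $\gamma$, and every $(k-\epsilon)$-excursion of $\gamma$ far enough along is a $k$-excursion of $\beta_x$; hence
\[
\frac{\mathcal{A}^*(k-\epsilon)}{\pi\,\text{area}(S)}\le\liminf_{t\to\infty}\frac{N^*_{{\mathbf v}_x}(k)(t)}{t}\le\limsup_{t\to\infty}\frac{N^*_{{\mathbf v}_x}(k)(t)}{t}\le\frac{\mathcal{A}^*(k+\epsilon)}{\pi\,\text{area}(S)},
\]
and letting $\epsilon\to 0$, using continuity of $\mathcal{A}^*$, gives~(\ref{overlap}) and~(\ref{app}) for ${\mathbf v}_x$. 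One must be slightly careful that the finitely many ``early'' excursions that differ between $\gamma$ and $\beta_x$ do not affect the $t\to\infty$ limit, and that the perturbation of a $k$-excursion into a $(k\pm\epsilon)$-excursion respects the approximating/non-approximating distinction up to finitely many exceptions — the latter because the endpoint conditions defining $I$ versus $J$ are open and the endpoints of the relevant lifts converge as one moves out along the two asymptotic geodesics.

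For the logarithm-law conclusion~(\ref{discsullivan}) of Theorem~\ref{2thm} applied to ${\mathbf v}_x$, I would argue that the three ingredients in the proof of Theorem~\ref{2thm} survive the passage from ${\mathbf v}$ to ${\mathbf v}_x$: the limit~(\ref{loglog}) relating $\log i$ and $\log\tau_i$ follows from the already-established counting limits for ${\mathbf v}_x$; the Sullivan limit~(\ref{sullivan2}) for $\text{dist}({\mathbf v}_x)(t)$ holds because $\limsup_t \text{dist}({\mathbf v}_x)(t)/\log t$ is unchanged when one replaces ${\mathbf v}_x$ by an asymptotic geodesic (the two distance functions differ by a bounded additive error beyond some point, since the geodesics eventually stay within $\epsilon$, so their ratios with $\log t$ have the same $\limsup$); and the depth comparison~(\ref{dog}) then follows from the Lemma preceding the proof of Theorem~\ref{2thm} exactly as before, since $D^*({\mathbf v}_x)(n)$ differs from the depth along the asymptotic geodesic $\gamma$ by a bounded multiplicative factor for large $n$ (the tangency depths of two $\epsilon$-close asymptotic geodesics to a horocycle differ by a bounded ratio). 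Assembling these gives $\limsup_n -\log D^*({\mathbf v}_x)(n)/\log n=1$, i.e. ${\mathbf v}_x\in\mathcal{E}^+$ for $x\in G$, and since $G$ has full measure in $(0,1)$ the theorem follows.

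The main obstacle I anticipate is not any single deep estimate but the bookkeeping in the two comparison steps: making precise the claim that ``asymptotic geodesics have the same asymptotic excursion statistics'' requires checking that (a) the correspondence between excursions of $\gamma$ and of $\beta_x$ is a bijection outside a finite set, (b) depths correspond up to a factor tending to $1$ (or at least bounded), so that both the counting limits and the $-\log D^*/\log n$ limsup are preserved, and (c) the approximating/non-approximating dichotomy is preserved outside a finite set. Each of these is geometrically clear from the picture of two vertical-type rays converging together high in the cusp, and the quantitative control is of the elementary type already exploited in Lemma~\ref{ab} and in the proof of Proposition~\ref{thickL}, but stating it cleanly enough to justify the squeeze is where the real work lies.
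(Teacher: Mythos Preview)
Your overall architecture---pass from a generic ${\mathbf v}\in\mathcal{E}^+$ with forward endpoint $x$ to ${\mathbf v}_x$ via the fact that $\tilde\beta_x$ and $\tilde\gamma$ are asymptotic---is exactly the paper's strategy, and your treatment of~(\ref{overlap}) and of the transfer of Sullivan's law is sound. The gap is in your handling of the \emph{approximating} case.

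You claim the $(k\pm\epsilon)$ squeeze respects the approximating/non-approximating distinction ``because the endpoint conditions defining $I$ versus $J$ are open and the endpoints of the relevant lifts converge as one moves out along the two asymptotic geodesics.'' This justification is wrong: the special lift attached to the $i$-th excursion of $\beta_x$ has endpoints $(g_i(x),g_i(\infty))$, while the nearby excursion of $\gamma$ has special lift with endpoints $(g_i(x),g_i(y))$. The forward endpoints coincide, but the backward endpoints $g_i(\infty)$ and $g_i(y)$ have no reason to be close to one another as $i\to\infty$; each $g_i$ is a different M\"obius map and there is no convergence of these pairs. So whether one lands in $I$ and the other in $J\setminus I$ is not controlled by the asymptotic closeness of the two geodesics at the excursion point.

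The paper closes this gap by avoiding the squeeze altogether for approximating excursions. It invokes Lemma~\ref{frac}: an approximating excursion of $\beta_x$ corresponds to a $\Gamma$-rational $p/q$ for which $\tilde\beta_x$ crosses at least two geodesics of $\mathscr{L}$ ending at $p/q$. After normalizing so that $\tilde\gamma_-=y\notin[0,1]$ and $\tilde\gamma(0)$ sits over $(0,1)$, the geodesics in $\mathscr{L}_0$ (those with both feet in $[0,1]$) are crossed by $\tilde\beta_x$ if and only if they are crossed by $\tilde\gamma$. This yields an \emph{exact bijection} $a_i\leftrightarrow e_i$ between approximating excursions of $\beta_x$ and of $\gamma$, with $\varphi(\tilde\beta_x,a_i)=\varphi(\tilde\gamma,e_i)$. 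From that bijection the paper gets $|\log D^a({\mathbf v}_x)(i)-\log D^a({\mathbf v})(i)|<\epsilon$ for $i$ large, which immediately gives~(\ref{discsullivan}) for $*=a$; and it compares $N^a_{{\mathbf v}_x}(k)(\tau_i)$ and $N^a_{{\mathbf v}}(k)(t_i)$ directly using~(\ref{tau}) and the lower bound from Proposition~\ref{thickL}, rather than a $(k\pm\epsilon)$ sandwich. For~(\ref{discsullivan}) with $*=$ nothing, the paper does \emph{not} attempt a bijection of ordinary excursions; it reduces to the approximating case via $\lim_j i_j/j=1/\log 2$, so that $\limsup_i -\log D({\mathbf v}_x)(i)/\log i=\limsup_j -\log D^a({\mathbf v}_x)(j)/\log j$.

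In short: your plan is correct in outline, but the step ``approximating status is preserved under the $\epsilon$-perturbation'' needs the crossing characterization of Lemma~\ref{frac}, not an endpoint-convergence argument; once you have the resulting exact bijection, the squeeze becomes unnecessary and the proof is cleaner.
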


This theorem provides a means to translate the geometric results like Theorem \ref{counting}, Theorem \ref{2thm} and Theorem \ref{before} into their number theoretic analogs.

 \begin{proof}

The part of the theorem dealing with  formula (\ref{overlap}) from Theorem \ref{counting} was already proved in Proposition \ref{prop}. We shall first address Theorem \ref{2thm} for approximating excursions, then Theorem \ref{counting} for approximating excursions and then Theorem \ref{2thm} for excursions.

Let $\tilde{\mathcal{E}}^+$ denote the lifts of vectors in $\mathcal{E}^+$. Notice that if $(\zeta,\psi,t)\in\tilde{\mathcal{E}}^+$ for one value of $t$ then it is in there for all values of $t$. By Proposition \ref{thickL} (ii), for $x$ in a full measure set ${\bf B}\subset (0,1)$, almost all  $y\not\in [0,1]$ and $t\in \RR$, $\tilde{{\mathbf v}}=(x,y,t)\in  \tilde{\mathcal{E}}^+$. We choose $t$ so that $\tilde{\gamma}=\tilde{\gamma}(\tilde{{\mathbf v}})$ satisfies $0<\text{Re}(\tilde{\gamma}(0))<1$ and $| \tilde{\gamma}(0)-1/2|>0.$ Suppose from here on that $x\in {\bf B}.$

Let $\mathscr{L}_0$ denote the set of geodesics in $\mathscr{L}$ with both of their endpoints in $[0,1].$  Given the normalization of  $\tilde{{\mathbf v}}$ and the fact that the geodesics $\tilde{\beta}_x$ and $\tilde{\gamma}$ are asymptotic at $x$, $\tilde{\beta}_x$  will intersect  a geodesic in $ \mathscr{L}_0$   if and only if $\tilde{\gamma}$ intersects that same geodesic. In particular,  $\tilde{\beta}_x$ intersects a set of geodesics in $ \mathscr{L}_0$ that share an endpoint  if and only if $\tilde{\gamma}$ intersects that same set of geodesics. Using Lemma \ref{frac}, this shows that there is a one-to-one correspondence between the set $\{a_i\}$ of approximating excursions along the ray $\beta_x$ and the set $\{e_i\}$ of approximating excursions along the ray $\gamma$, so that 
$\varphi(\tilde{\beta}_x,a_i)=\varphi(\tilde{\gamma}, e_i).$

A further consequence of the geodesics being asymptotic is that given $\epsilon >0$, by making $t, \tau$ sufficiently large, every point at which one of $ {\gamma}(t)$ or $ {\beta}_x(\tau)$ is tangent to a cusp neighborhood of $P$ is within $\epsilon$ of a point at which the other geodesic is tangent to a cusp neighborhood of $P$. More precisely, given $\epsilon>0$ there exists an integer $K>0$ so that for $i>K,\, \text{dist}( e_i, a_i)<\epsilon.$ Using the hyperbolic metric in the upper half-plane we can get the estimate: $|\log(\text{Im}(\tilde{e}_{i}))-\log(\text{Im}(\tilde{a}_i))|<\epsilon.$

Since Im$(\tilde{e}_{i})=1/d(e_i)$ and Im$(\tilde{a}_{i})=1/d(a_i),$ we have
$|\log(d(a_i))-\log(d(e_i))|<\epsilon$ or equivalently, $|\log D^a({\mathbf v}_x)(i)-\log D^a({\mathbf v})(i)|<\epsilon.$ It follows that 
\beqq
\limsup_{i\rightarrow\infty}\frac{ \log D^a({\mathbf v}_x)(i)}{\log i}=
\limsup_{i\rightarrow\infty}\frac{ \log D^a({\mathbf v})(i)}{\log i}=1,
\eeqq
completing the first part of the argument.

Now suppose that the approximating excursions along $\tilde{\gamma}$ and $\tilde{\beta}_x$ have excursion parameters $t_i$ and $\tau_i$ respectively; that is,  $\tilde{\gamma}(t_i)=e_i$ and $\tilde{\beta}_x(\tau_i)=a_i.$  Since $\text{dist}(e_i,a_i)\rightarrow 0$, there exists a constant $m>0$ so that
\beq\label{tau}
\tau_i-m<t_i<\tau_i+m
\eeq
By Proposition \ref{thickL} there is a lower bound $b$ for the distance between consecutive excursion parameters for approximating excursions along a geodesic. 
Putting these observations together we get:
\beqq
\frac{N^a_{\mathbf v}(k)(t_i)-\frac{m}{b}}{t_i}\leq 
\frac{N^a_{\mathbf v}(k)(t_i-m)}{t_i}\leq 
\frac{N^a_{{\mathbf v}_x}(k)(\tau_i)}{t_i}\leq 
\eeqq
\beqq
\frac{N^a_{\mathbf v}(k)(t_i+m)}{t_i}\leq
\frac{N^a_{\mathbf v}(k)(t_i)+\frac{m}{b}}{t_i}.
\eeqq
It follows that 
\beqq
\lim_{i\rightarrow\infty}\frac{N^a_{\mathbf v}(k)(t_i)}{t_i} = \lim_{i\rightarrow\infty}\frac{N^a_{\mathbf v_x}(k)(\tau_i)}{t_i}.
\eeqq
Using (\ref{tau}) a second time, the above implies
\beqq
\lim_{i\rightarrow\infty}\frac{N^a_{v_x}(k)(\tau_i)}{\tau_i}=\mathcal{A}(k)
\eeqq
 and it follows that formula (\ref{app}) holds for the vector ${\mathbf v}_x$.

Finally, we need to prove that the conclusion of Theorem \ref{2thm} for excursions holds for ${\mathbf v}_x$. Let $\{t_i\}$ be the set of excursion parameters for excursions along $\tilde{\beta}_x$ and let $\{t_{i_j}\}$ be the subset of parameters for approximating excursions.

Observe that 
\beqq
\frac{1}{\log 2}= \lim_{t\rightarrow\infty}\frac{N_{{\mathbf v}_x}(2)(t)}{t}\times\lim_{t\rightarrow\infty}\frac{t}{N^a_{{\mathbf v}_x}(2)(t)}=
\lim_{j\rightarrow\infty}\frac{i_j}{t_{i_j}}\times\lim_{j\rightarrow\infty}\frac{{t_{i_j}}}{j}=\lim_{j\rightarrow\infty}\frac{i_j}{j}.
\eeqq

Then we have
\beqq
\limsup_{i\rightarrow\infty}-\frac{ \log D({\mathbf v}_x)(i)}{\log i}=
\limsup_{i\rightarrow\infty}-\frac{\log (d(\tilde{\beta}_x(t_i)))}{\log  i }
\eeqq
but since all $k$-excursions with $k<1$ are approximating excursions, this is equal to 
\beqq
\limsup_{j\rightarrow\infty}-\frac{\log (d(\tilde{\beta}_x(t_{i_j})))}{\log (i_j)}=
\limsup_{j\rightarrow\infty}\left (-\frac{\log (d(\tilde{\beta}_x( i_j )))}{\log(i_j)}\times
\frac{\log (i_j)}{\log j}\right ).
\eeqq
By the previous observation and the definition of $D^a$ this equals
\beqq
\limsup_{j\rightarrow\infty}-\frac{ \log D^a({\mathbf v}_x)(j)}{\log j} 
\eeqq
which has already been shown to be 1.
\end{proof}

 \subsection{The proofs of Theorems \ref{0thm}, \ref{theta} and \ref{thetasums}}\label{0thmtheta}
 
  Given $ \frac{p}{q}\in\QQ(\Gamma)$, the horocycle $\mathcal{H}_{\frac{p}{q}}(s)$ is defined to be the Euclidean disc of radius $\frac{s}{q^2}$ in
$\HH$, tangent to $\RR$ at the point $\frac{p}{q}.$ Since $\frac{p}{q}\in \QQ(\Gamma),$ there exists a transformation $g\in\Gamma$ with $g(\frac{p}{q})=\infty.$ By an easy calculation \cite{haas}, $g(\mathcal{H}_{\frac{p}{q}} (\frac{r}{2}))=H_{\frac{1}{r}}=\{z\,|\,\text{Im}(z)>\frac{1}{r}\}$ and thus $\mathcal{H}_{\frac{p}{q}}(\frac{r}{2}  )$ is a lift of the cusp neighborhood $C_r$. Let $e=\gamma(t_e)$ be an excursion or an approximating excursion of $\gamma$. If $\tilde{\gamma}$ is a lift of $\gamma$ then for $\frac{p}{q}=\varphi(\tilde{\gamma},e),\, \tilde{\gamma}$ is tangent to $\mathcal{H}_{\frac{p}{q}}(  \frac{d(e)}{2} )$ at $\tilde{\gamma}(t_e).$

Henceforth we assume that $x\in (0,1)$ with $v_x\in\mathcal{E}^+$ and $\{e_i=\ \beta_x(t_j)\}$ is the sequence of either excursions or approximating excursions along $\beta_x$. Let $\{\frac{p_j}{q_j}=\varphi(\tilde{\beta}_x, e_i)\}$ be the associated sequence of convergents or n-convergents.  Note that since $\sigma_i$ is orthogonal to $ \tilde{\beta}_x$
\beq
\lim_{i\rightarrow\infty}\frac{p_i}{q_i}=
\lim_{i\rightarrow\infty}\tilde{\beta}_x(t_i)=
\lim_{t\rightarrow\infty}\tilde{\beta}_x(t)=x.
\eeq
In other words, the sequence  $\frac{p_i}{q_i}=\varphi(\tilde{\gamma}, e_i)$ does converge to $x$.
 Recall that $(\tilde{\beta}_x)_{e_j}$ is the special lift of $\beta_x$, which is tangent to $H_{\frac{1}{d(e_j)}}$ at the point $ (\tilde{\beta}_x)_{e_j}(t_{e_j})$. Let $g_j\in \Gamma$ be the transformation with $g_j(( \tilde{\beta}_x)_{e_j})=\tilde{
\beta}_x$ and $g_j(\infty)=\frac{p_j}{q_j}.$ Thus by the previous paragraph, $\tilde{e}_j= \tilde{\beta}_x(t_j)$ is the point at which $\tilde{\beta}_x$ is tangent to $\mathcal{H}_{\frac{p_j}{q_j}}(\frac{d(e_j)}{2}  )$. This is precisely the point $\tilde{e}_j=x+i\frac{d(e_j)}{2q_j^2}\in \HH.$

It follows from the above that the Euclidean distance from the center of $\mathscr{H}_{\frac{p_j}{q_j}}(\frac{d(e_j)}{2}  )$, which is 
 $\frac{p_j}{q_j}+i\frac{d(e_j)}{2q_j^2}$, to the point $\tilde{e}_j$ is equal to the distance form $\frac{p_j}{q_j}$ to $x$ on $\RR.$ But this is also the radius of the horocycle.  In other words,
 \beq\label{equ}
 |x-\frac{p_j}{q_j}|=\frac{d(e_j)}{2q_j^2}.
 \eeq
Since $d(e_j)=D^*({\mathbf v}_x)(j)$ we have proved that

 \beq\label{t*}
 \theta_j^*(x)=\frac{1}{2}D^*({\mathbf v}_x)(j).
 \eeq
 Using this identity,  Theorems \ref{theta} and \ref{thetasums} follow immediately from Theorems \ref{2thm} and \ref{before} of Section \ref{xdepth}.    
 
We now turn to the proof of Theorem \ref{0thm}. Observe that $t_j-t_{j-1}$,  the distance along $\beta_x$ between consecutive excursions is 
\beqq
\text{dist}(\tilde{e}_j,\tilde{e}_{j-1})= \text{dist}\left (x+i\frac{d(e_j)}{2q_j^2},x+i\frac{d(e_{j-1})}{2q_{j-1}^2}\right )
\eeqq
computed in $\HH$. This value is easy to calculate and is
\beqq
\log   \frac{d(e_{j-1}) }{2q_{j-1}^2}-\log  \frac{d(e_{j}) }{2q_j^2}   =
\log |x-\frac{p_{j-1}}{q_{j-1}}| -\log |x-\frac{p_j}{q_j}|,
\eeqq
where the equality follows from (\ref{equ}).
 Then the average length of an excursion along $\beta_x$ is
 \beqq
\frac{\pi\, area(S)}{\mathcal{A}^*(2)}= \lim_{t\rightarrow\infty}\frac{t}{N^*_{{\mathbf v}_x}(2)(t)}=\lim_{n\rightarrow\infty}\frac{t_n}{n}=
 \lim_{n\rightarrow\infty} \frac{1}{n}\sum_{i=1}^n(t_i-t_{i-1})=
 \eeqq
 \beqq
  \lim_{n\rightarrow\infty} \frac{1}{n} \sum_{i=1}^n(\log |x-\frac{p_{j-1}}{q_{j-1}}| -\log |x-\frac{p_j}{q_j}| ) =
 \lim_{n\rightarrow\infty}- \frac{1}{n}\log |x-\frac{p_n}{q_n}|.
 \eeqq 
 This proves that the last limit exists and takes the value asserted in Theorem \ref{0thm}.
 
Then from  Theorem \ref{theta} and the above we can conclude that
 \beqq
 0=\lim_{n\rightarrow\infty}\frac{\log \theta^*_n(x)}{n}=\lim_{n\rightarrow\infty}\left (
 \frac{\log q_n^2}{n}+ \frac{\log |x-\frac{p_n}{q_n}|}{n}\right ),
 \eeqq
 which completes the proof of Theorem \ref{0thm}.\qed\\
 
Finally, we should mention that it follows easily from the above that
\beqq\label{nc}
|x-\frac{p}{q}|=\frac{\theta}{q^2}\,\, \text{for}\,\, \theta< 1
\eeqq
if and only if $\frac{p}{q}$ is  a $\Gamma$-n-convergent.

\subsection{Proofs of Corollary \ref{classical}}\label{cor pf}

\subsubsection{As a corollary to Theorem \ref{theta}} 
 
The irrational number $x\in\RR$ has the continued fraction expansion $x=a_0+[a_1,a_2,\ldots]$ and the rational approximants to $x\in(0,1)$ are given by $\frac{p_n}{q_n}=[a_1,\ldots,a_n] $. Define $a_n'=a_n+[a_{n+1},\ldots].$
From \cite{HW} we have
$$|x-\frac{p_n}{q_n}|=\frac{1}{q_n(a_{n+1}'q_n+a_{n-1})}$$
which gives
$$\theta_n(x)=\frac{1}{a_{n+1}'+\frac{q_{n-1}}{q_n}}.$$
As the $q_n$ are increasing and $a_{n+1}<a_{n+1}'<a_{n+1}+1$ we get
$a_{n+1}+1 < 1/\theta_n(x)< a_{n+1}+2.$
Thus we have
$$1=\limsup_{n\rightarrow\infty}-\frac{\log\theta_n(x)}{\log n} =\limsup_{n\rightarrow\infty}
\frac{\log a_{n+1}}{\log n}.$$\qed

\subsubsection{Classical proof}
The referee has pointed out a simple alternative to our proof of   Corollary \ref{classical}  based on purely classical methods.

 The result is a consequence of a theorem of Borel and F. Bernstein, see \cite{HW} (Theorem 197) or alternatively \cite{khinchin} (Theorem 30). Let $\epsilon \geq 0$, and define $\phi_{\epsilon}(n)=n(\log(n))^{1+\epsilon}.$ Then 
 \beqq
 \sum_{n=1}^{\infty}\frac{1}{n(\log(n))^{1+\epsilon} }
 \eeqq
converges for $\epsilon >0$ and diverges for $\epsilon =0.$ It follows from the above mentioned theorem that 
$$W=\{x\,|\,a_n\geq n(\log(n))\, \text{for infinitely many }\, n\}$$
 is a set of measure one. It also follows that for $\epsilon>0$
 $$ U_{\epsilon}=\{x\,|\,a_n>n(\log(n))^{1+\epsilon}\, \text{for only finitely many $n$}\}$$ has measure one. These together imply the corollary.\qed

\section*{Acknowledgements}
 I would like to thank Kasra Rafi and Doug Hensley for their insightful observations regarding  Corollary \ref{classical}. I am very grateful to the referee for many helpful suggestions and for providing a simple classical proof of Corollary \ref{classical}.

\end{document}